\long\def\delete#1{}
\newtheorem{theorem}{Theorem}
\newtheorem{lemma}[theorem]{Lemma}
\newtheorem{corollary}[theorem]{Corollary}
\newtheorem{observation}[theorem]{Observation}
\def\g{\gamma}
\title{{\bf  Labelling Algorithms for Paired-domination Problems in Block and Interval Graphs}
\thanks{Supported in part by National Natural
Science Foundation of China (Nos. 60673048 and 10471044 ) and
National Basic Research Program of China (2003CB318003).}}
\author{{\bf  Lei Chen$^1$}
\  \  {\bf Changhong Lu$^{2,1,}$}\footnote{Correspond author.
E-mail:
chlu@math.ecnu.edu.cn}\ \ \ \ {\bf Zhenbing Zeng$^1$}\\
      \\ 
       $^1$ Shanghai Key Laboratory of Trustworthy Computing, \\
          East China Normal University, Shanghai, 200062, China \\ 
            \\
      $^2$ Department of Mathematics, \\
       East China Normal University,  Shanghai, 200062, China
       }
\date{}
\begin{document}
\openup 1.0\jot \maketitle

\begin{quote} {\bf Abstract}~ Let $G=(V,E)$ be a graph without
isolated vertices. A set $S\subseteq V$ is a paired-domination set
if every vertex in $V-S$ is adjacent to a vertex in $S$ and  the
subgraph induced by $S$ contains a perfect matching. The
paired-domination problem is to determine the paired-domination
number, which  is the minimum cardinality of a paired-dominating
set. Motivated by a mistaken algorithm given by Chen, Kang and Ng
[ Paired domination on interval and circular-arc graphs, Disc.
Appl. Math. 155(2007),2077-2086], we present two linear time
algorithms to find a minimum cardinality paired-dominating set in
block and interval graphs. In addition, we  prove that
paired-domination problem is {\em NP}-complete for bipartite
graphs, chordal graphs, even split graphs. \vskip 0.3cm
\textbf{Keywords:}~Algorithm, Block graph, Interval graph, Paired-domination, NP-complete\\
\textbf{2000 Mathematics Subject Classification: 05C69; 05C85;
68R10}
\end{quote}
\newpage
\section{Introduction}

Domination and its variations in graphs have been extensively
studied, cf. \cite{ht1,ht2}.  A set of vertices $S$ is a {\sl
dominating set} for a graph $G=(V,E)$ if every vertex in $V-S$ is
adjacent to a vertex in $S$. The {\sl domination problem} is to
determine the {\sl domination number} $\g (G)$, which is the
minimum cardinality of a dominating set for $G$.

Let $G=(V,E)$ be a simple graph without isolated vertices. For a
vertex $v\in V$, the {\sl open neighborhood} of $v$ is defined as
$N(v)=\{u\in V~|~uv\in E\}$ and the {\sl closed neighborhood} of
$v$ is defined as $N[v]=N(v)\cup \{v\}$.  The distance between $u$
and $v$, denoted by $d_G(u,v)$, is the minimum length of a path
between $u$ and $v$. For a subset $S$ of $V$, the subgraph of $G$
induced by the vertices in  $S$ is denoted by $G[S]$. A {\sl
matching} in a graph $G$ is a set of pairwise nonadjacent  edges
in $G$. A {\sl perfect matching} $M$ in $G$ is a matching such
that every vertex of $G$ is incident to an edge of $M$. Some other
notations and terminology not introduced  in here can be found in
\cite{West}.

 A set $S\subseteq V$ is a {\sl paired-dominating set} of $G$ if
$S$ is a dominating set of $G$ and the induced subgraph $G[S]$ has a
perfect matching. If $e=uv\in M$, where $M$ is a perfect matching of
$G[S]$, we say that $u$ and $v$ are {\sl paired in $S$}. The {\sl
paired-domination problem} is to determine the {\sl
paired-domination number} $\gamma_p(G)$, which  is the minimum
cardinality of a paired-dominating set for a graph $G$. The
paired-domination problem was introduced by Haynes and Slater
\cite{hs}. If we think of each $s\in S\subseteq V$ as the location
of a guard capable of protecting each vertex in $N[S]$, then
``domination" requires every vertex to be protected, and for
paired-domination, we will require the guards' location to be
selected as adjacent pairs of vertices so that each guard is
assigned one other and they are designated as backups for each
other.

Linear time algorithm for paired domination
problem  is available for  trees  \cite{kang2}; Polynomial time
algorithm for paired domination problem  is available for
circular-arc graphs \cite{kang1}. Other results on this subject
can be found in \cite{he,sy}.  Although a linear time algorithm
for paired-domination problem  on interval graphs was  given in
\cite{kang1},  it is incorrect. 
In this paper, we employ the labelling technique to give efficient
algorithms for finding a minimum paired-dominating set in  block
graphs (which contains trees)  and interval graphs. 
In section $2$, we begin with presenting  a linear time algorithm
for paired-domination problem in block graphs and then prove the
correctness of the algorithm. Our algorithm can deduce a quite
simple algorithm for paired-domination problem in trees.  In
section $3$, we first show that the algorithm in \cite{kang1} for
finding a minimum paired-dominating set in interval graphs is
false. Then we give an intuitive algorithm for the
paired-domination problem of interval graphs. In \cite{hs},
authors proved that the paired domination problem is {\em
NP}-complete for undirected graphs. In section $4$, we  show that
it is still {\em NP}-complete  for bipartite graphs, chordal
graphs, even split graphs.


\section{Algorithm for paired-domination problem in Block graphs}

In a graph $G=(V,E)$ with $|V|=n$ and $|E|=m$, a vertex $x$ is a
{\sl cut-vertex} if deleting $x$ and all edges incident to it
increases the number of connected components. A {\sl block} of $G$
is a maximal connected subgraph of $G$ without a cut-vertex. If
$G$ itself is connected and has no cut-vertex, then G is a block.
The intersection of two blocks contains at most one vertex, and a
vertex is a cut-vertex if and only if it is the intersection of
two or more blocks.  A {\sl block graph} is a connected graph
whose  blocks are complete graphs. If every block is $K_2$, then
it is a tree.

As we know, every block graph not isomorphic to complete graph has
at least two {\sl end blocks}, which are blocks with only one
cut-vertex.  Beginning with an end block and working recursively
inward, we can find a vertex ordering $v_1, v_2, \cdots, v_n$ in
{\em O(n+m)} time such that $v_iv_j\in E$ and $v_iv_k\in E$
implies that $v_jv_k\in E$ for $i<j<k\le n$. Note that if $B$ is
an end block with cut-vertex $x$ of block graph $G$, then the
vertices in $B$ is following continually and $x$ is the last
vertex of $B$ in the vertex ordering $v_1,v_2,\cdots, v_n$.

Let $v_1,v_2,\cdots,v_n$ be the vertex ordering of block graph $G$
such that $v_iv_j\in E$ and $v_iv_k\in E$ implies that $v_jv_k\in
E$ for $i<j<k\le n$.
We define the following notations:\\
\\
$1$. $F(v_i)=v_j$, $j=\max\{k~|~v_iv_k\in E,~i<k\}$. $v_j$ is called
the  {\sl father}  of
$v_i$ and $v_i$ is a {\sl child} of $v_j$. 
For technical reasons, we say complete graph has an end block and
$v_n$ is a cut-vertex.
Obviously, $v_j$ must be a cut-vertex in block graphs. \\
$2$. $C(v_i)=\{v_j~|~F(v_j)=v_i\}$.  \\
$3$. $PD(G)$ is a minimum paired-dominating set of $G$.\\
$4$. For any block graph $G$, we define a rooted tree $T(G)$ about
$G$, whose vertex set is $V(G)$, and $uv$ is an edge of $T(G)$ if
and only if $F(u)=v$. The root of $T(G)$ is $v_n$. Moreover let
$T_{v_i}$ is a subtree of $T(G)$ rooted at $v_i$ and every vertex
in $T_{v_i}$ except $v_i$ is a descendant of $v_i$. For a vertex
$v_i\in V(G)$, $D_G(v_i)$ denotes the vertex set consisting of the
descendants of $v_i$ in $T(G)$ and $D_G[v_i]=D_G(v_i)\cup\{v_i\}$.
That is, $D_G[v_i]=V(T_{v_i})$.


In our algorithm, we will use two labels on each vertex $u$, denoted by $(D(u),L(u))$: \\
$\hspace*{10pt} D(u)= \left\{
  \begin{array}{ccl}
     0  & if  & u~is~not~dominated;\\
     1  & if  & u~is~dominated.
  \end{array}
  \right.
$\\
$\hspace*{10pt} L(u)= \left\{
  \begin{array}{ccl}
     0  & if  & u~is~not~put~into~PD(G);\\
     1  & if  & u~is~put~into~PD(G),~but~it~has~ no ~paired~vertex~in~PD(G);\\
     2  & if  & u~is~put~into~PD(G),~and ~it~has~ a ~paired~vertex~in~PD(G).
  \end{array}
  \right.
$

\vskip 0.2cm

 Now, we give an algorithm to determine a minimum paired-dominating set in block
graphs.

\vspace{3mm}

\noindent{\bf Algorithm MPDB.} Find a minimum paired-dominating set
of a block graph.\\
{\bf Input.} A block graph $G=(V,E)$ with a vertex ordering
$v_1,v_2,\cdots ,v_n$ ($n\ge 2$) such that $v_iv_j\in E$ and
$v_iv_k\in E$ implies that $v_jv_k\in E$ for $i<j<k\le n$. Each
vertex $v_i$ has a label $(D(v_i),L(v_i))=(0,0)$. $F(v_i)=v_j$ with
$j=\max\{k~|~v_kv_i\in E,~k>i\}$,
$C(v_i)=\{v_j~|~F(v_j)=v_i\}$.\\
{\bf Output.} A minimum paired-dominating set $PD$ of $G$.

\noindent{\bf Method.}\\
\hspace*{4mm} For $i=1$ to $n$ do\\
\hspace*{8mm} If ($D(v_i)=0$ and $i\neq n$) then\\
\hspace*{12mm} $L(F(v_i))=1$;\\
\hspace*{12mm} $D(u)=1$ for every vertex $u\in N[F(v_i)]$;\\
\hspace*{8mm} endif\\
\hspace*{8mm} If ($D(v_i)=1$) then\\
\hspace*{12mm} Let $C'(v_i)=\{w~|~w\in C(v_i)~and~L(w)=1\}$;\\
\hspace*{12mm} $L(w)=2$ for every vertex $w\in C'(v_i)$;\\
\hspace*{12mm} Let $C''(v_i)=\{w~|~w\in C'(v_i)~ and~ w\in V(M)$,
$M$ is a maximum matching in $G[C'(v_i)]\}$.\\
\hspace*{12mm} If  ($C'(v_i)-C''(v_i)\neq\emptyset$), then\\ 
\hspace*{16mm} $L(v_i)=2$;\\
\hspace*{16mm} $D(u)=1$ for every vertex $u\in N[v_i]$;\\
\hspace*{16mm} Take a vertex $w\in C'(v_i)-C''(v_i)$, for every vertex $v\in C'(v_i)-C''(v_i)-\{w\}$\\
\hspace*{16mm} $L(v')=2$ for some vertex $v'\in C(v)$ such that $L(v')=0$;\\
\hspace*{12mm} endif\\
\hspace*{8mm} endif\\
\hspace*{4mm}  endfor\\
\hspace*{4mm} If ($D(v_n)=0$ or $L(v_n)=1$) then\\
\hspace*{8mm} $L(v_n)=2$;\\
\hspace*{8mm} $L(w)=2$ for some vertex $w\in C(v_n)$ such that
$L(w)=0$;\\
\hspace*{8mm} $D(v_n)=1$;\\
\hspace*{4mm}  endif\\
\hspace*{4mm} Output $PD=\{v| L(v)=2\}$\\ 
 \hspace*{4mm}  end

\vskip 0.3cm

Next we will verify the validity of the algorithm MPDB. For a block
graph $G$ of order $n\ge 2$. when the algorithm MPDB terminates, any
vertex $u\in V(G)$ has a label $D(u)=1$ and any vertex $v\in PD$ has
a label $L(v)=2$. Hence, the output $PD$ is a paired-dominating set
of $G$. It suffices to prove that  $PD$ is a
minimum paired-dominating set of $G$. 

$S_i$ is  the  set of vertices defined by  $v\in S_i$ if and only if
$v$ has the  label $(1,2)$ when  $v_i$ is the considering vertex in
the loop of the algorithm MPDB  for $i=1,2,\cdots,n$. In particular,
we define $S_{n+1}$ as the set of vertices with label $(1,2)$ after
$v_n$ is considered in the loop of the algorithm MPDB. In order to
prove that the output $PD$ is a minimum paired-dominating set of
$G$, we proceed by induction on $i$ and show that, when $v_i$ ($1\le
i\le n$) is the considering vertex in the loop of the algorithm
MPDB, there is a minimum paired-dominating set $S$ in $G$ such that
$S_i\subseteq S$. Obviously, $S_1=\emptyset$. This is certainly true
for $i=1$. Assume that there is a minimum paired-dominating set $S$
in $G$ such that $S_i\subseteq S$ ($1\le i\le n$). We show that
$S_{i+1}$ holds for a given $i+1$ by the following lemmas.



\begin{lemma}\label{lem2}
Let $B$ be an end block of a block graph $G$. If there is a vertex
$u\in V(B)$ such that $u$ is not a cut-vertex with $D(u)=0$ and
$F(u)=v$ ($v\not= u$), then  $L(v)=1$ and $D(w)=1$ for every
vertex $w\in N[v]$. Moreover $PD(G)=PD(G')$, where
$G'=G-(D_G(v)-\{u\})$.
\end{lemma}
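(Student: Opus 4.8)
The plan is to analyze what the algorithm MPDB does at the step where it processes the non-cut-vertex $u \in V(B)$ with $D(u)=0$, and then to establish the structural claim $PD(G) = PD(G')$ by exhibiting two inequalities between paired-domination numbers together with a matching set-extension argument. First I would record the immediate consequence of the algorithm's code: since $D(u)=0$ and (as $u$ lies in an end block and is not the cut-vertex) $u \ne v_n$, the first ``if'' branch fires with $F(v_i) = F(u) = v$; the code sets $L(v) = 1$ and $D(w) = 1$ for every $w \in N[v]$. This is exactly the first assertion of the lemma, so that part is essentially a reading of the pseudocode, modulo noting that the processing order guarantees $u$ is reached before $v$ (the vertices of the end block $B$ appear consecutively with the cut-vertex last, and $v = F(u)$ is that cut-vertex or another later vertex).

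For the harder claim $PD(G) = PD(G')$ where $G' = G - (D_G(v) - \{u\})$, the natural route is to show $\gamma_p(G) = \gamma_p(G')$ by a two-sided argument. For the inequality $\gamma_p(G) \le \gamma_p(G')$: given a minimum paired-dominating set $S'$ of $G'$, I would note that $v \in V(G')$ and argue that $v$ (together with its partner, or with a newly-added neighbour) can be used to dominate all of $D_G(v) - \{u\}$, since every vertex of that removed set is a descendant of $v$ in $T(G)$ and hence — because the blocks are complete and $v$ is the cut-vertex separating $B$ and the other blocks below $v$ from the rest — lies within distance governed by the block structure; the key point is that $N[v]$ in $G$ already absorbs the entire end block $B$, and more generally that adding at most a bounded correction to $S'$ yields a paired-dominating set of $G$ of the same size, using that $v$ can be forced into $S'$ without increasing cardinality (if $v \notin S'$, some neighbour of $v$ in $G'$ dominates it, and one can swap). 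For the reverse inequality $\gamma_p(G) \ge \gamma_p(G')$: take a minimum paired-dominating set $S$ of $G$; I would show that restricting/modifying $S$ to $V(G')$ — replacing any guards placed inside $D_G(v) - \{u\}$ by $v$ and a suitable partner — cannot increase the cardinality and still paired-dominates $G'$, using again that those removed vertices only needed to be covered ``from $v$'s side'' and that $v$ covers everything in $G'$ that they could have covered within $G'$.

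The main obstacle, and the step I would spend the most care on, is the bookkeeping of the perfect-matching / pairing condition during these swaps: dropping a guard from $D_G(v) - \{u\}$ may orphan its matched partner, and inserting $v$ with a fresh partner must be done so that the induced subgraph on the new set still has a perfect matching and the total count is unchanged. I would handle this by a careful case split on whether $v \in S$, whether $v$'s partner in $S$ (if any) lies inside or outside $D_G(v)$, and on the parity/structure of $S \cap (D_G(v) - \{u\})$; in each case I would exhibit an explicit replacement pairing. A secondary subtlety is verifying that after the operation $D(u) = 1$ is consistent with $u$ being dominated by $v$ in $G'$ as well — but since $uv \in E$ and $v \in V(G')$, this is automatic. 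Once these replacements are shown to preserve both domination and the matching while not increasing size, the two inequalities give $\gamma_p(G) = \gamma_p(G')$, and tracking which concrete set is produced gives $PD(G) = PD(G')$ in the sense used by the algorithm.
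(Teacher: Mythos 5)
Your plan follows the paper's proof in outline: the first claim is read off the pseudocode, and $PD(G)=PD(G')$ is proved by two inequalities, the harder one by swapping guards out of $D_G(v)-\{u\}$ and repairing the pairing by cases. Two points, however, need repair before this is a proof. First, your argument for $\gamma_p(G)\le\gamma_p(G')$ is incoherent as written: you cannot ``add a bounded correction'' to $S'$ and keep ``the same size,'' and your fallback for the case $v\notin S'$ (swap in a neighbour) ignores the matching constraint. Neither device is needed. In $G'$ the vertex $u$ has $v$ as its \emph{unique} neighbour, so every paired-dominating set of $G'$ contains $v$ (if it contained $u$ instead, $u$'s partner could only be $v$ anyway); since $v$ dominates the deleted vertices, $PD(G')$ is already, with no modification, a paired-dominating set of $G$ --- this is exactly the paper's one-line argument, and the forced membership of $v$ is also what justifies setting $L(v)=1$, which you treated as a mere reading of the code. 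Second, the ``careful case split'' you defer is the entire mathematical content of the lemma, so it must actually be carried out; the paper's version has just three cases: (i) $v\in PD(G)$ paired with some $x\in V(B)$ --- replace $x$ by $u$; (ii) $v\in PD(G)$ paired with some $x\notin V(B)$ --- $PD(G)$ already serves for $G'$; (iii) $v\notin PD(G)$ --- then, since $N[u]\subseteq V(B)$ and no non-cut-vertex of $B$ has neighbours outside $B$, there are paired $x,y\in V(B)\setminus\{v\}$ in $PD(G)$, and $(PD(G)-\{x,y\})\cup\{u,v\}$ works. Each replacement preserves cardinality and a perfect matching, which is all the bookkeeping amounts to; writing out these three cases converts your sketch into the paper's proof.
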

\begin{proof}
$G'$ has at least two vertices, hence $PD(G')$ is also a
paired-dominating set of $G$. Then $|PD(G)|\le |PD(G')|$.

Now we show that $|PD(G')|\le |PD(G)|$. If $v\in PD(G)$ and $v$ is
paired with $x\in B$, then $(PD(G)-\{x\})\cup \{u\}$ is also a
paired-domination of $G'$. If $v\in PD(G)$ and $v$ is paired with
$x\notin B$, it is clear that $PD(G)$ is a paired-domination of
$G'$. Assume that $v\notin PD(G)$, then there are two vertices
$x,y\in B$ such that $x,y\in PD(G)$. Hence, $(PD(G)-\{x,y\})\cup
\{u,v\}$ is also a paired-domination of $G'$.

 $v$ must be in $PD(G')$ implies that $v$ is also in $PD(G)$.
Note that the paired vertex with $v$ may be in $B$ or not. So we
 set  $L(v)=1$ and $D(w)=1$ for every vertex $w\in N[v]$.
\ \ $\Box$
\end{proof}
\vskip 0.2cm
 From Lemma \ref{lem2}, when we consider a vertex
$v_i$ in an end block whose label is $(0,0)$ and 
$i\not=n$, then we will put its father $F(v_i)$ into $PD$. Since its
paired vertex can not be determined at this time, we temporarily
 label $L(F(v_i))=1$. Lemma \ref{lem2} implies that  we can
consider a possible smaller block graph $G'$ since $PD(G)=PD(G')$,
where $G'=G-(D_G(v)-\{u\})$.

When we consider a vertex $v_i$ such that $D(v_i)=1$, we will take
its child set to see if there is a child must be paired with
$v_i$. After that we will also  consider a possible smaller block
graph.

\begin{lemma}\label{lem3}
Let $G$ be a block graph 
 and $v_i$ be a considering vertex with  $D(v_i)=1$ in some step of the loop.
 Set $C'(v_i)=\{v_j|~v_j\in C(v_i)~and~L(v_j)=1\}$,  $C''(v_i)=\{w~|~w\in C'(v_i)~ and~ w\in V(M)$,
 $M$ is a maximum
matching in $G[C'(v_i)]\}$  and $S_i(v_j)=S_i\cap D_G(v_j)$ for
$j\le i$. Then, $L(w)=2$ for every vertex $w\in C'(v_i)$.

\noindent$(1)$
\begin{minipage}[t]{155mm}
\setlength{\baselineskip}{18pt} If $L(v_i)=0$ and
$C'(v_i)-C''(v_i)=\emptyset$,
 then
 $PD(G)=PD(G')\cup
S_{i+1}(v_i)$, where  $G'=G-D_G[v_i]\not= K_1$  or
 $G'=G-D_G(v_i)= K_2$.
\end{minipage}

\vspace{2mm}\noindent $(2)$
\begin{minipage}[t]{155mm}
\setlength{\baselineskip}{18pt} If $L(v_i)=1$ and
$C'(v_i)-C''(v_i)=\emptyset$, then
$PD(G)=PD(G')\cup S_{i+1}(v_i)$,  
where $G'=G-(D_G(v_i)-\{x\})$ and $x$ is a vertex whose father is
$v_i$ and $L(x)=0$.

\end{minipage}

\vspace{2mm}\noindent$(3)$
\begin{minipage}[t]{155mm}
\setlength{\baselineskip}{18pt} If
$C'(v_i)-C''(v_i)\not=\emptyset$, then $L(v_i)=2$ and $D(u)=1$ for
every $u\in N[v_i]$,  and for every vertex $v\in C'(v_i)-C''(v_i)$
except one vertex $w$,
 $L(v')=2$ for some vertex $v'\in C(v)$ such that
$L(v')=0$. Moreover,  $PD(G)=PD(G')\cup
(S_{i+1}(v_i)-S_{i+1}(w)
-\{w\})$, where $G'=G-(D_G(v_i)-D_G[w])$.
\end{minipage}
\end{lemma}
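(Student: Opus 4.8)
The plan is to argue as in the proof of Lemma~\ref{lem2}. That $L(w)=2$ for every $w\in C'(v_i)$ is simply the second line executed when $D(v_i)=1$; since a vertex receives $L=1$ only together with $D=1$, each such $w$ had label $(1,1)$ before $v_i$ was processed and label $(1,2)$ afterwards, and $C'(v_i)\subseteq C(v_i)\subseteq D_G(v_i)$ gives $C'(v_i)\subseteq S_{i+1}(v_i)$; in case (3) the children $v'$ chosen by the code likewise end with label $(1,2)$ and lie in $D_G(v_i)$. For each of the three cases I would then establish the displayed identity by the two inequalities below. Write $X$ for the committed set on the right-hand side, that is $X=S_{i+1}(v_i)$ in (1)--(2) and $X=S_{i+1}(v_i)-S_{i+1}(w)-\{w\}$ in (3); thus $X$ is exactly the set of $PD$-vertices the code has committed inside the part of $D_G[v_i]$ deleted on passing to $G'$, and $X\cap V(G')=\emptyset$, so $|PD(G')\cup X|=|PD(G')|+|X|$.

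For $\gamma_p(G)\le\gamma_p(G')+|X|$ I would check that $PD(G')\cup X$ paired-dominates $G$. This rests on an invariant carried by the whole run of the algorithm: the committed vertices, together with the few kept-but-committed vertices of $G'$ (none in case (1) unless $G'=K_2$, then $v_i$; $v_i$ in case (2); $v_i$ and $w$ in case (3)), paired-dominate the deleted part $D_G[v_i]\setminus V(G')$ --- every touched vertex below $v_i$ has $D=1$, hence is dominated from within, and each committed vertex has its partner committed too (the already-fixed pairings among descendants plus the maximum matching $M$ on $C''(v_i)$), the lone exception being that in case (3) the vertex $w$ is matched with $v_i$, both kept in $G'$. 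Granting this, and that $PD(G')$ paired-dominates $G'$ and supplies the partners of $v_i$ and $w$ (which is what the continuation of the algorithm on $G'$ does --- the post-loop step if $G'=K_2$, the kept labels $L=2$ otherwise), $PD(G')\cup X$ paired-dominates $G$. In case (1) the one extra point over case (2) is that $L(v_i)=0$ and $D(v_i)=1$ together let us delete $v_i$ itself: a short argument using the elimination order and the block structure shows that the vertex responsible for $D(v_i)=1$ is either a child of $v_i$ already in $PD$, or a neighbour of $v_i$ above it carrying $L=1$ and hence lying in $PD(G')$, so $v_i$ stays dominated; the genuinely special situation is $G-D_G[v_i]=K_1$, handled by keeping $v_i$ and taking $G'=G-D_G(v_i)=K_2$.

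For $\gamma_p(G')\le\gamma_p(G)-|X|$ I would take a minimum paired-dominating set $S$ of $G$ with $S_i\subseteq S$ (the inductive hypothesis) and normalize it inside $D_G[v_i]$. The heart is case (3). First, one may assume $C'(v_i)\subseteq S$: each vertex of $C'(v_i)$ got its label $L=1$ via an instance of the situation of Lemma~\ref{lem2}, so the exchange in that lemma's proof (which produces a minimum paired-dominating set containing the father in question), applied once per vertex of $C'(v_i)$ over disjoint parts of $D_G(v_i)$, turns $S$ into a minimum paired-dominating set still containing $S_i$ and now containing $C'(v_i)$. Next, since $M$ is a \emph{maximum} matching of $G[C'(v_i)]$, the missed set $C'(v_i)-C''(v_i)$ is independent in $G[C'(v_i)]$, so its vertices cannot be matched to one another; moreover no child of $v_i$ is adjacent to $F(v_i)$ (a triangle on a child of $v_i$, $v_i$ and $F(v_i)$ would lie in a single block, forcing that child's father to be $F(v_i)$, not $v_i$), so each missed vertex must be matched to $v_i$, to another child of $v_i$, or to one of its own children. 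Hence at most one missed vertex uses $v_i$ and each of the others forces a distinct extra vertex of $D_G(v_i)$ into $S$; replacing the pairings $S$ uses on $D_G[v_i]$ by the canonical ones of the algorithm ($M$ on $C''(v_i)$, the edge $v_iw$, one fresh child $v'$ per remaining missed vertex, the already-resolved descendants) does not increase $|S|$, keeps $S_i$, adds $S_{i+1}$, and makes the restriction of $S$ to $V(G')$ a paired-dominating set of $G'$. Cases (1)--(2) are the sub-cases $C'(v_i)=C''(v_i)$, where no vertex of $C'(v_i)$ needs an outside partner, so the same exchange --- now without the edge $v_iw$ --- lets us delete all of $D_G(v_i)$, together with $v_i$ in case (1), or all but one fresh child $x$ of $v_i$ in case (2), matching the respective $G'$.

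The step I expect to be the real obstacle is this normalization in case (3): showing that an arbitrary minimum paired-dominating set can be rearranged, without enlarging it and without dropping any vertex of $S_i$, so that on $D_G[v_i]$ it uses precisely $M$, the edge $v_iw$, and the selected children $v'$. This needs a careful case split on how each vertex of $C'(v_i)\cup\{v_i\}$ is matched in $S$ --- partner inside a block of $v_i$ versus a strict descendant, partner among $C'(v_i)$ versus below it --- and repeated use of the maximality of $M$, both to forbid pairing two missed vertices directly and to reroute any matching edge of $S$ inside $C'(v_i)$ onto $M$. Once the configuration inside $D_G[v_i]$ is normalized, checking that $|X|$ is as stated, that $PD(G')\cup X$ is minimum, and that the degenerate situations ($G-D_G[v_i]=K_1$ or $K_2$, or $i=n$) behave, is routine bookkeeping.
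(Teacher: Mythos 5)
Your proposal takes essentially the same route as the paper's proof: the easy inequality by checking that $PD(G')\cup X$ paired-dominates $G$, and the hard one by passing to a minimum paired-dominating set $S$ with $S_i\cup C'(v_i)\subseteq S$ (justified, as you do, by Lemma~\ref{lem2}-type exchanges on the disjoint subtrees $D_G[w]$) and then normalizing its pairings inside $D_G[v_i]$ using the maximality of $M$. The one step you defer --- rearranging $S$ so that $C''(v_i)$ is paired by $M$, $w$ is paired with $v_i$, and every other unmatched vertex of $C'(v_i)$ is paired with one of its own children --- is exactly what the paper carries out, via the set $C_p'$ of partners lying outside $C'(v_i)$, the bound $|C_p'|\ge |C'(v_i)|-|C''(v_i)|$, and the swaps $S-C_p'\cup CC\cup\{v_i\}$ and their variants according to whether $v_i\in S$ and $v_i\in C_p'$, so your plan coincides with the paper's argument and goes through as you anticipate.
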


\begin{proof}
 $L(w)=1$ for $w\in C'(v_i)$ implies that $w$ must be put into $PD$, so $L(w)=2$ for
 every vertex $w\in C'(v_i)$ and we will determine their paired vertices in this step.

$(1)$ If $v_i$ is not a cut-vertex in graph $G$, then
$S_{i+1}=S_i$ and we do nothing.  Suppose that $v_i$ is a
cut-vertex. 
Obviously, $S_{i+1}(v_i)=S_i(v_i)\cup C'(v_i)$ and $PD(G')\cup
S_{i+1}(v_i)$ is a paired-dominating set of $G$, where
$G'=G-D_G[v_i]\not= K_2$ or $G'=G-D_G(v_i)= K_2$. Now we will show
that $PD(G')\cup S_{i+1}(v_i)$ is a minimum paired-dominating set of
$G$.

By the hypothesis on induction, there exists a minimum
paired-dominating set $S$ of $G$ with $S_i(v_i)\subseteq S$. Since
the label of every vertex in $C'(v_i)$ is $(1,1)$, it must be in a
minimum paired-dominating set of $G$. When $v_i$ is the considering
vertex,  these vertices in $S_i$ have no influence on the label of
remained vertices in possible smaller graph.
We can assume that $S$ is a minimum paired-dominating set with
$S_i(v_i)\cup C'(v_i)\subseteq S$. We first claim that every vertex
in $C'(v_i)$ is paired with another vertex in $C'(v_i)$. Let
$C_p'=\{x|x\not\in C'(v_i)$ and $x$ is paired with a vertex $y$ in
$S$ such that $y\in C'(v_i)\}$. Since vertices in $S_i$ have been
paired each other,   $C_p'\cap S_i=\emptyset$. If $v_i\not\in S$,
since  vertices in $C'(v_i)$ can be paired each other, $S-C_p'$ is a
smaller paired-dominating set of $G$, a contradiction. Then, $v_i\in
C_p'$ and we assume that $v_i$ is paired with $x\in C'(v_i)$. Note
that $|C_p'|\ge 2$. Let $z\in C_p'$ be paired with $y\in C'(v_i)$
and $xy\in E$.  Obviously, $z\in D_G(v_i)$. If $t\in S$ for every
$t\in N(v_i)$, then  $x$ can be paired with $y$ and $S-\{v_i, z\}$
is  a smaller paired-dominating set of $G$, a contradiction. If
there exists some vertex $t\in N(v_i)$ and $t\notin S$, then $S\cup
\{t \}-\{z\}$ is also a paired-dominating set of $G$. Anyway, we get
that these vertices in $C'(v_i)$ are paired each other in $S$.

If $G'=G-D_G(v_i)= K_2$, it is easy to know that $S-S_{i+1}(v_i)$ is
a paired-dominating set of $G'=K_2$. Hence, we know that $PD(G')\cup
S_{i+1}(v_i)=PD(G)$, where $G'=G-D_G(v_i)= K_2$.

Suppose that $G'=G-D_G[v_i]\not=K_1$. We will show that
$S-S_{i+1}(v_i)$ is a paired-dominating set of $G'$. Hence,
$PD(G')\cup S_{i+1}(v_i)=PD(G)$, where $G'=G-D_G[v_i]\not= K_1$.

If $v_i\not\in S$, then it is obvious that $S-S_{i+1}(v_i)$ is a
paired-dominating set of $G'$. Assume that $v_i\in S$ and its paired
vertex in $S$ is $x$. If the father of $v_i$, denoted by $F(v_i)$,
is  in $S$, and its paired vertex is not $v_i$ (that is
$x\not=F(v_i)$), $S-\{x, v_i\}$ is a smaller paired domination set
of $G$, a contradiction. If $F(v_i)$ is not in $S$,  $S-\{x\}\cup
\{F( v_i)\}$ is also a paired-dominating set of $G$. Without loss of
generality, we assume that $v_i$ is paired with $F(v_i)$ in $S$. If
$N(F(v_i))\subseteq S$, then $S-\{v_i,F(v_i)\}$ is a smaller
paired-dominating set of $G$, a contradiction. Assume that $z\in
N(F(v_i))$ is not in $S$. $S-\{v_i\}\cup\{z\}$ is also a
paired-dominating set of $G$. Hence, we can find a minimum paired
domination set $S$ of $G$ such that $S_{i+1}(v_i)\subseteq S$ and
$v_i\notin S$. Note that $D(v_i)=1$. i.e., $v_i$ has been dominated
by some vertex in $S_{i+1}(v_i)$. Hence, $S-S_{i+1}(v_i)$ is a
paired-dominating set of $G'$, where $G'=G-D_G[v_i]\not=K_1$.

$(2)$ When $v_i$ is the considering vertex in the loop of the
algorithm MPDB, $v_i$ has been labelled by $(1,1)$. There exists a
vertex $x\in C(v_i)$ with label $(1,0)$  dominated by $v_i$. In
this situation, $v_i$ must be put into $PD$ after $F(v_i)$ is
 considered in the algorithm MPDB, and  the paired vertex of $v_i$
  may be a vertex in $C(v_i)$. Similarly, we can prove that  those vertices in $C'(v_i)$ are paired
  each other, hence, the paired vertex of $v_i$ may  be a vertex in
  $C(v_i)$ and its label is $(1,0)$.  Thus, let  $G'=G-(D_G(v_i)-\{x\})$, where $x$ is a vertex whose father is
$v_i$ and $L(x)=0$. Using  the same argument, we get that
$PD(G)=PD(G')\cup S_{i+1}(v_i)$. The detail is left to readers.


$(3)$ Since $G[C''(v_i)]$ has a perfect matching and it is also a
maximum matching in $G[C'(v_i)]$, the set $C'(v_i)-C''(v_i)$ is an
independent set. For every vertex $v\in C'(v_i)-C''(v_i)$ except
one $w$, let $v'\in C(v)$ with $L(v')=0$. (It always can be found
since the label of $v$ is (1,1)) and the set of these vertices is
denoted by $CC$, then $|CC|=|C'(v_i)|-|C''(v_i)|-1$.  After $v_i$
was considered in the loop of the algorithm, every vertex in
$C'(v_i)\cup CC\cup \{v_i\}$ has the label $(1,2)$. Hence,
$S_{i+1}=S_i\cup C'(v_i)\cup CC\cup \{v_i\}$.

Let $G'=G-(D_G(v_i)-D_G[w'])$. It is easy to check that $PD(G')\cup
(S_{i+1}(v_i)
-\{w\})$ is a paired-dominating set of $G$.
Next we will show that $PD(G')\cup (S_{i+1}(v_i)
-\{w\})$
is a minimum paired-dominating set of $G$.   We can similarly find a
minimum paired-dominating set $S$ of $G$ such that $S_i\cup
C'(v_i)\subseteq S$.

Let $C_p'=\{x|x\in S$, $x\not\in C'(v_i)$ and its paired vertex is
in $C'(v_i)\}$. Then $|C_p'|\ge |C'(v_i)|-|C''(v_i)|$. If
$v_i\not\in S$, then the set $S-C_p'\cup CC\cup\{v_i\}$ is a
minimum paired-dominating set of $G$. 
If $v_i\in S$ and $v_i\in C_p'$, then $S-(C_p'-\{v_i\})\cup CC$ is
also a minimum paired-dominating set of $G$. If $v_i\in S$ and
$v_i\not\in C_p'$,  we also get that $S-C_p'\cup CC\cup \{w'\}$ is a
minimum paired-dominating set of $G$, where $w'\in C(w)$ has the
label $(1,0)$. Anyway,  we can assume that these vertices in
$C''(v_i)$ are paired in $S$ each other and $\{v_i\}\cup CC\subseteq
S$. And for every $v\in C'(v_i)-C''(v_i)-\{w\}$, $v$ is paired in
$S$ with $v'\in CC$.

Next we will say that $v_i$ and $w$ can be paired in $S$. Let
$w'$($v'$, respectively) is the paired vertex of $w$($v_i$,
respectively) in $S$. We assert that there is a vertex $v''\in
N_G(v')$ such that $v''\not\in S$, for, otherwise, $S-\{w',v'\}$ is
a smaller paired-dominating set of $G$. Hence, $S-\{w'\}\cup\{v''\}$
is also a minimum paired-dominating set of $G$. Thus we can assume
$w$ is paired with $v_i$ in $S$.

Let $S'=S-(S_{i+1}(v_i)-S_{i+1}(w)
-\{w\})$. Then $S'$ is a
paired-dominating set of $G'$, where $G'=G-(D_G(v_i)-D_G[w])$.
Hence, that is $PD(G')\cup (S_{i+1}(v_i)-S_{i+1}(w)
-\{w\})$ is a
minimum paired-dominating set of $G$.  In this case,
$PD(G)=PD(G')\cup (S_{i+1}(v_i)-S_{i+1}(w)
-\{w\})$, where
$G'=G-(D_G(v_i)-D_G[w'])$.

The discussion above implies that $w, v_i$ are paired in $S$, so
let $L(v_i)=2$ and $D(u)=1$ for every vertex in $N[v_i]$. Since
 every vertex $v\in C'(v_i)-C''(v_i)$ except one vertex $w$ is paired with $v'\in
 C(v)$, which is labelled by $(1,0)$, we
 label $v'$ by $(1,2)$ in this step.
\ \  $\Box$
\end{proof}

Using Lemmas \ref{lem2} and \ref{lem3} recursively, we have that
$PD(G)=PD(G')\cup S_{n+1}$ when $v_n$ has been considered in the
loop of the algorithm, where $G'$ is empty or $G'$ is $K_2$. If
$G'$ is $K_2$, it implies that $D(v_n)=0$ or $L(v_n)=1$ after the
loop of the algorithm. For the former, $v_n$ need to be dominated
and no vertex in $N(v_n)$ is put in $PD$. For the latter, $v_n$
must be put in to $PD$ and it need a paired vertex. Hence,  in the
end of the algorithm  MPDB, we change the labels of $v_n$ and $w$
to $(1,2)$, where $w\in C(v_n)$ with the label $(1,0)$.

\begin{theorem}
Given a vertex ordering, described in the beginning of this section,
of a block graph $G$, the algorithm MPDB can produce a minimum
paired-dominating set of $G$ in $O(m+n)$ time, where $m=|E(G)|$ and
$n=|V(G)|$.
\end{theorem}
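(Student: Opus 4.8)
The plan is to establish two things: correctness of the output of Algorithm MPDB, and the $O(m+n)$ running-time bound. Correctness has essentially been reduced already: by the inductive scheme set up before Lemma~\ref{lem2}, it suffices to show that $S_{n+1}\subseteq S$ for some minimum paired-dominating set $S$ of $G$, and then that the final clean-up step on $v_n$ converts the residual graph $G'$ (empty or $K_2$) into a full paired-dominating set without increasing cardinality. So I would first spell out that the loop, by Lemmas~\ref{lem2} and~\ref{lem3} applied repeatedly, maintains the invariant: after processing $v_i$ there is a minimum paired-dominating set $S$ with $S_{i+1}\subseteq S$, and moreover $PD(G)=PD(G')\cup(\text{vertices frozen with label }(1,2))$ where $G'$ is the shrunken block graph obtained by deleting the processed descendant subtrees. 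The base case $S_1=\emptyset$ is trivial. The inductive step is exactly the case analysis of Lemma~\ref{lem3} (together with Lemma~\ref{lem2} for the $D(v_i)=0$ branch): whichever of the three branches fires, the lemma gives $PD(G)=PD(G')\cup(\text{newly frozen }(1,2)\text{ vertices})$, so the minimum paired-dominating set of $G'$ extends to one of $G$ containing $S_{i+1}$. After the loop, $G'$ is $K_1$-free and has at most two vertices; if it is $K_2$ with a vertex still needing domination or pairing, the terminal block of code fixes $L(v_n)=2$ and pairs it with a child labelled $(1,0)$, which is optimal because any paired-dominating set of $G$ must include two adjacent vertices dominating $v_n$'s residual block. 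Concluding, $PD=\{v\mid L(v)=2\}$ is a minimum paired-dominating set.

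For the running time I would argue as follows. Computing the vertex ordering $v_1,\dots,v_n$, the arrays $F(\cdot)$, and the child lists $C(\cdot)$ is done in $O(m+n)$ time, as already noted in the section's preamble (work inward from end blocks). The main loop runs $n$ iterations. In iteration $i$ the only potentially expensive operations are: (a) updating $D(u)=1$ for all $u\in N[F(v_i)]$ or $u\in N[v_i]$; (b) forming $C'(v_i)$ and $C''(v_i)$ by computing a maximum matching in $G[C'(v_i)]$; (c) the loop over $C'(v_i)-C''(v_i)$ that relabels one child $v'$ of each such $v$. For (a), note that $F(v_i)$ is a cut-vertex and each closed neighbourhood touched this way corresponds to edges that get "used up"; summed over all $i$ the total work is $O(m+n)$ because each edge is scanned a bounded number of times (once when one endpoint is processed as a child whose father is being dominated, once in the symmetric role). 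For (c), each relabelled vertex $v'$ is a distinct vertex receiving its permanent label $(1,2)$, so the total over all iterations is $O(n)$, and finding a suitable $v'\in C(v)$ with $L(v')=0$ costs $O(|C(v)|)$ which sums to $O(n)$ over all $v$. The one point requiring care is (b): $G[C'(v_i)]$ is not arbitrary — since $G$ is a block graph, the children of $v_i$ lying in a common block with $v_i$ form a clique, and children in different blocks are non-adjacent; hence $G[C'(v_i)]$ is a disjoint union of cliques, and a maximum matching in a disjoint union of cliques is computed greedily in time linear in $|C'(v_i)|+|E(G[C'(v_i)])|$. Summed over $i$, since the blocks partition the edge set, this is again $O(m+n)$.

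The main obstacle, and the step I would dwell on, is exactly this charging argument for the maximum-matching computations and the neighbourhood-relabelling: one must verify that no edge and no vertex is "charged" more than a constant number of times across the whole run. The key structural fact making this work — and which I would state explicitly as a small claim — is that when $v_i$ is processed, all of $D_G(v_i)$ has already been processed and either frozen with a final label or deleted into some $G'$, so $v_i$ interacts (for matching, for domination updates, for the $CC$ relabelling) only with its own children and its own father, and the relevant edges lie in blocks containing $v_i$. Since every block has a unique "last" vertex in the ordering (its cut-vertex, or $v_n$), every block is "charged" to exactly one vertex $v_i$, and a block $B$ containing $k$ vertices contributes $O(k^2)=O(|E(B)|+k)$ work; summing $\sum_B(|E(B)|+|V(B)|)=O(m+n)$ gives the bound. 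I would present this block-charging observation first, then let parts (a), (b), (c) of the per-iteration cost follow from it, and finally add the $O(m+n)$ preprocessing and $O(n)$ final clean-up to conclude the total is $O(m+n)$.
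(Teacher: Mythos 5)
Your proposal is correct and follows essentially the same route as the paper: correctness is delegated to the inductive invariant established via Lemmas~\ref{lem2} and~\ref{lem3} plus the final clean-up on $v_n$, and the time bound hinges on the same key observation that $G[C'(v_i)]$ is a disjoint union of cliques, so the maximum matching step is trivial. The only difference is that you spell out the block-charging argument for the $O(m+n)$ bound, which the paper merely asserts ("every vertex and every edge are scanned a constant number of times"); your version is a welcome elaboration, not a different proof.
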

\begin{proof}
From discussion above,  the algorithm MPDB can produce a minimum
paired-dominating set of a block graph $G$. Furthermore, every
vertex in $V(G)$ and every edge in $E(G)$ are scanned in a constant
number. Although we must find a maximum matching in $G[C'(v_i)]$, it
can be done in linear time, because $G[C'(v_i)]$ is disjoint union
of some clique. If every clique has even number of vertices, then
$G[C'(v_i)]$ has a perfect matching, otherwise it has not a perfect
matching. \ \ $\Box$
\end{proof}
\vskip 0.2cm
 In \cite{kang2}, a linear time algorithm was given to
determine a minimum paired-dominating set in trees. Since block
graphs contain trees, we can also use the algorithm MPDB to produce
a paired-dominating set in trees. The only difference is that
$G[C'(v_i)]$ can not have a perfect matching in tree. Here, we give
a very simple algorithm for tree which can be deduced from
MPDB at once.
\vskip 0.2cm
\noindent{\bf Algorithm MPDT.} Find a minimum paired-dominating set
of a tree.\\
{\bf Input.} A tree $T=(V,E)$ with a vertex ordering
$v_1,v_2,\cdots, v_n$ such that $d_T(v_i,v_n)< d_T(v_j,v_n)$ implies
that $i<j$.  Each vertex $v_i$ has a label $(D(v_i),L(v_i))=(0,0)$.
$F(v_i)=v_j$ with $v_iv_j\in E$ and $j>i$;
$C(v_i)=\{v_j~|~F(v_j)=v_i\}$.\\ 
{\bf Output.} A minimum paired-dominating set $PD$ of $T$.

\noindent{\bf Method.}\\
\hspace*{4mm} For $i=1$ to $n$ do\\
\hspace*{8mm} If ($D(v_i)=0$ and $i\neq n$) then\\
\hspace*{12mm} $L(F(v_i))=1$;\\
\hspace*{12mm} $D(u)=1$ for every vertex $u\in N[F(v_i)]$;\\
\hspace*{8mm}endif\\
\hspace*{8mm} If ($D(v_i)=1$) then\\
\hspace*{12mm} Let $C'(v_i)=\{w~|~w\in C(v_i)$~and~$L(w)=1\}$;\\
\hspace*{12mm} If $C'(v_i)\neq\emptyset$ then\\
\hspace*{16mm} $L(v_i)=2$;\\
\hspace*{16mm} $D(u)=1$ for every vertex $u\in N[v_i]$;\\
\hspace*{16mm} $L(w)=2$ for every vertex $w\in C'(v_i)$;\\
\hspace*{16mm} Take a vertex $w\in C'(v_i)$, for every vertex $v\in C'(v_i)-\{w\}$\\
\hspace*{16mm} $L(v')=2$ for some vertex $v'\in C(v)$ such that $L(v')=0$;\\
\hspace*{12mm} endif\\
 \hspace*{8mm}endif\\
\hspace*{4mm}  endfor\\
\hspace*{4mm} If  ($D(v_n)=0$ or  $L(v_n)=1$)  then\\
\hspace*{8mm} $L(v_n)=2$;\\
\hspace*{8mm} $L(w)=2$ for some vertex $w\in C(v_n)$ such that
$L(w)=0$;\\
\hspace*{8mm} $D(v_n)=1$;\\
\hspace*{4mm}endif\\
\hspace*{4mm} Output $PD=\{v| L(v)=2\}$\\
\hspace*{4mm}  end

\begin{corollary}
Algorithm MPDT can produce a minimum paired-dominating set of a tree
$T$ in $O(m+n)$, where $m=|E(T)|$ and $n=|V(T)|$.
\end{corollary}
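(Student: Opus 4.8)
The plan is to obtain the corollary from the preceding Theorem by observing that Algorithm MPDT is nothing but Algorithm MPDB specialized to the case in which every block of the input graph is a $K_2$, i.e.\ to trees. First I would check that the inputs line up: a distance-layered ordering $v_1,\dots,v_n$ of $T$ from $v_n$ (vertices farther from $v_n$ receiving smaller indices) is a legitimate ordering in the sense required by MPDB. Indeed, in a tree the neighbours of a vertex $v_i$ are its parent on the path to $v_n$ together with its children, and the parent is strictly closer to $v_n$; hence $v_i$ has exactly one neighbour with larger index, namely $F(v_i)$, and the implication ``$v_iv_j\in E$ and $v_iv_k\in E$ with $i<j<k$ forces $v_jv_k\in E$'' holds vacuously (two such neighbours would create a triangle). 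The auxiliary data $F$ and $C$ are then defined exactly as in MPDB, with $C(v_i)$ the set of children of $v_i$.

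Next I would isolate the single simplification that produces MPDT from MPDB. For any vertex $v_i$ of a tree, the induced subgraph $T[C(v_i)]$ is edgeless (an edge between two children of $v_i$ would close a cycle), so $T[C'(v_i)]$ is edgeless as well, its maximum matching $M$ is empty, and therefore $C''(v_i)=V(M)\cap C'(v_i)=\emptyset$ and $C'(v_i)-C''(v_i)=C'(v_i)$. Substituting this identity into MPDB turns the test ``$C'(v_i)-C''(v_i)\neq\emptyset$'' into ``$C'(v_i)\neq\emptyset$'', turns ``take $w\in C'(v_i)-C''(v_i)$'' into ``take $w\in C'(v_i)$'', and turns ``for every $v\in C'(v_i)-C''(v_i)-\{w\}$'' into ``for every $v\in C'(v_i)-\{w\}$''. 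The only reorganization is that the assignment $L(w)=2$ for $w\in C'(v_i)$, performed unconditionally in MPDB whenever $D(v_i)=1$, is placed inside the branch $C'(v_i)\neq\emptyset$ in MPDT; this changes nothing, since the assignment is vacuous precisely when $C'(v_i)=\emptyset$, and it does not interact with the other assignments of the iteration nor with the already-computed set $C'(v_i)$. Thus the execution of MPDT on $T$ coincides step for step with the execution of MPDB on $T$, and by the preceding Theorem its output is a minimum paired-dominating set of $T$.

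For the running time I would invoke the same accounting as in the Theorem: every vertex and every edge of $T$ is scanned a constant number of times, so the main loop and the final post-loop correction at $v_n$ together cost $O(m+n)$. The one step of MPDB that needed a separate linearity argument — computing a maximum matching in $G[C'(v_i)]$ — is here trivial, since that graph is edgeless, so all that is needed is $C'(v_i)$ itself, obtained by a single pass over the children of $v_i$. Hence MPDT runs in $O(m+n)$ time, which for a tree is $O(n)$ as $m=n-1$.

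I do not anticipate a real obstacle: the entire content is the bookkeeping observation $C''(v_i)=\emptyset$. Should one prefer a self-contained correctness proof rather than the reduction above, the only work would be to rerun Lemmas~\ref{lem2} and~\ref{lem3} under $C''(v_i)=\emptyset$ — cases $(1)$ and $(2)$ of Lemma~\ref{lem3} then apply exactly when $C'(v_i)=\emptyset$, case $(3)$ exactly when $C'(v_i)\neq\emptyset$, and the subcase analysis in case $(3)$ simplifies accordingly — but this merely re-derives what the Theorem already supplies.
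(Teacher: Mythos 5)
Your proposal is correct and matches the paper's (essentially unstated) argument: the paper derives the corollary by remarking that MPDT is just MPDB specialized to trees, where $G[C'(v_i)]$ has no edges, which is exactly your observation that $C''(v_i)=\emptyset$ and hence the branches of MPDB collapse to those of MPDT. Your write-up merely makes explicit the details (the vacuous ordering condition, the step-for-step coincidence of the two executions, the trivialization of the matching step) that the paper leaves to the reader.
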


\section{Algorithm for paired-domination problem in Interval graph}
An {\sl interval representation} of a graph is a family of intervals
assigned to the vertices so that vertices are adjacent if and only
if the corresponding intervals intersect. A graph having such a
representation is an {\sl interval graph}. Booth and Lueker
\cite{kg} gave an $O(|V(G)|+|E(G)|)$-time algorithm for recognizing
an interval graph and constructing an interval representation using
$PQ$-tree. In \cite{kang1}, a linear algorithm was given to produce
a minimum paired-dominating set of an interval graph. But this
algorithm is incorrect. Next, we will introduce this algorithm and
given a counterexample.

In \cite{kang1}, it is assumed that the input graph is given by an
interval representation $I$ that is a set of $n$ sorted intervals
labelled by $1,2,\cdots,n$ in increasing order of their right
endpoints. The left endpoint of interval $i$ is denoted by $a_i$
and the right endpoint by $b_i~(1<a_i\le b_i\le 2n)$. Define the
following notations.

$(1)$ For a set $S$ of intervals, the largest left(right) endpoint
of the intervals in $S$ is denoted by $\max a(S)~(\max b(S))$; the
interval in $S$ with the largest right endpoint is denoted by
$last(S)$. Let $\max a(S)=0~(\max b(S)=0)$ if $S$ is empty. For
endpoint $e$, use $IFB(e)$ to denote the set of all intervals whose
right endpoint are less than $e$. For any interval $j$, let $l_j$ be
the interval such that intervals $l_j$ and $j$ have nonempty
intersection and $a(l_j)$ is minimum.

$(2)$ For $j\in \{1,2,\cdots,n\}$, define $V_j=\{i:i\in
\{1,2,\cdots,n\}$ and $a_i\le b_j\}$. Let $PD(j)=\{S:S\subseteq
V_j,~S$ is a paired-dominating set of $G[V_j]\}$. Let
$PD(i,j)=\{S:S\subseteq V_j,~S$ is a paired-dominating set of
$G[V_j],~i,j\in S$ and $i,j$ are paired  in  $S\}$. Let
$MPD(j)=Min(PD(j))$, $MPD(i,j)=Min(PD(i,j))$. The left endpoint sets
$A_i=\{a_j:b_{i-1}<a_j<b_j\}$ for $i\in I$, where $b_0=0$.

Introduce two intervals $n+1$ and $n+2$ with
$a_{n+1}=2n+1,~a_{n+2}=2n+2,~b_{n+1}=2n+3$, and $b_{n+2}=2n+4$. Let
$I_p$ be the set of intervals obtained by augmenting $I$ with the
two intervals $n+1$ and $n+2$.

\vspace{3mm}
\noindent{\bf Algorithm MPD}

\noindent {\bf Input:} A set $I_p$ of sorted intervals.

\noindent {\bf Output:} A minimum cardinality paired-dominating set
of $G$ with
interval representation $I_p$.\\
$1$. Find $\max a(IFB(a_j))$ for all $j\in I_p$.\\
$2$. Find $l_j$ for all $j\in I_p$.\\
$3$. Scan the endpoints of $I_p$ to find the left endpoint sets
$A_i=\{a_j:b_{i-1}<a_j<b_j\}$ for $i\in I$, \hspace*{5mm}where
$b_0=0$.\\
$4$. $MPD(0)=\emptyset$.\\
$5$. for $j=1$ to $n+2$ do\\
$6$. Find the left endpoint set $A_k$ containing $\max a(IFB(\min
(a_j,a_{l_j})))$.\\
$7$. Let $b_k$ be the right endpoint of the interval $k$ associated
with the left endpoint set $A_k$.\\
$8$. $MPD(j)=\{l_j,j\}\cup MPD(k)$.\\
$9$. end for\\
Output $MPD(n+2)$.
\vskip 0.3cm
 It is obvious that $MPD(n+2)-\{n+1,n+2\}$ got from
the algorithm MPD is a paired-dominating set of $G$. 
 The following counterexample implies that $MPD(n+2)-\{n+1,n+2\}$
may be not a minimum paired-dominating set of $G$.
\vskip 0.3cm
\begin{center}
\begin{picture}(40,40)
 \put(-150,30){\line(1,0){40}}\put(-130,10){\line(1,0){40}}\put(-115,-10){\line(1,0){60}}
 \put(-40,-10){\line(1,0){20}}\put(-80,30){\line(1,0){70}}\put(-30,10){\line(1,0){40}}
 \put(-135,35){$1$}\put(-120,15){$2$}\put(-100,-5){$3$}\put(-35,-5){$4$}\put(-50,35){$5$}\put(-20,15){$6$}

 \put(70,0){\circle*{3}}\put(140,0){\circle*{3}}\put(50,30){\circle*{3}}\put(90,30){\circle*{3}}
 \put(120,30){\circle*{3}}\put(160,30){\circle*{3}}
 \put(70,0){\line(1,0){70}}\put(70,0){\line(-2,3){20}}\put(70,0){\line(2,3){20}}
 \put(140,0){\line(-2,3){20}}\put(140,0){\line(2,3){20}}\put(120,30){\line(1,0){40}}\put(50,30){\line(1,0){40}}
 \put(70,-10){$3$}\put(140,-10){$5$}\put(50,40){$1$}\put(90,40){$2$}\put(120,40){$4$}\put(160,40){$6$}
\end{picture}

\end{center}

\vskip 0.2cm
 The figure above is a counterexample. The left figure
is an interval representation of the graph in the right figure.
The number is ordered by the right endpoint of intervals. The
parameters used in MPD are as follows:
\vskip 0.2cm
\begin{tabular}{|c|c|c|c|c|c|}\hline
$i$ & $a_i$ & $b_i$ & $\max a(IFB(a_i))$ & $l_i$  &  $A_i$       \\\hline
 1  & 0     &  3    &   0                & 2      &  $\{1,2\}$   \\\hline
 2  & 1     &  4    &   0                & 1      &  $\emptyset$ \\\hline
 3  & 2     &  6    &   0                & 1      &  $\{5\}$     \\\hline
 4  & 7     &  9    &   2                & 5      &  $\{7,8\}$   \\\hline
 5  & 5     &  10   &   1                & 3      &  $\emptyset$ \\\hline
 6  & 8     &  11   &   2                & 5      &  $\emptyset$ \\\hline
 7  & 13    &  15   &   8                & 8      &  $\emptyset$ \\\hline
 8  & 14    &  16   &   8                & 7      &  $\emptyset$ \\\hline
\end{tabular}
\vskip 0.2cm
Execute algorithm MPD as follows:\\
$j=1$, $\max a(IFB(\min(a_1,a_2)))=0$, $k=0$, $MPD(1)=\{1,2\}\cup MPD(0)=\{1,2\}$;\\
$j=2$, $\max a(IFB(\min(a_1,a_2)))=0$, $k=0$, $MPD(2)=\{1,2\}\cup MPD(0)=\{1,2\}$;\\
$j=3$, $\max a(IFB(\min(a_3,a_1)))=0$, $k=0$, $MPD(3)=\{1,3\}\cup MPD(0)=\{1,3\}$;\\
$j=4$, $\max a(IFB(\min(a_4,a_5)))=1$, $k=1$, $MPD(4)=\{4,5\}\cup MPD(1)=\{4,5,1,2\}$;\\
$j=5$, $\max a(IFB(\min(a_5,a_3)))=0$, $k=0$, $MPD(5)=\{3,5\}\cup MPD(0)=\{3,5\}$;\\
$j=6$, $\max a(IFB(\min(a_6,a_5)))=1$, $k=1$, $MPD(6)=\{5,6\}\cup MPD(1)=\{5,6,1,2\}$;\\
$j=7$, $\max a(IFB(\min(a_7,a_8)))=8$, $k=4$, $MPD(7)=\{1,2\}\cup MPD(4)=\{7,8,4,5,1,2\}$;\\
$j=8$, $\max a(IFB(\min(a_7,a_8)))=8$, $k=4$, $MPD(8)=\{1,2\}\cup MPD(4)=\{7,8,4,5,1,2\}$;\\

 Hence the result set of the algorithm MPD is
$\{4,5,1,2\}$. But it is easy to see that $\{3,5\}$ is a minimum
paired-dominating set of this graph.  Note that Lemma 2.3 and
Lemma 2.5 in \cite{kang1} are not right. The detail is left to
readers.

 Next, we employ the labelling technique to give  a
linear   algorithms for finding a minimum paired-dominating set of
an interval graph. Let $G=(V,E)$ be an interval graph and its
interval representation is $I$. For every vertex $u_i\in V$, $I_i$
is the corresponding interval, and let $a_i~(b_i$, respectively)
denote the left endpoint (right endpoint, respectively) of interval
$I_i$. We order the vertices of $G$ by $u_1,u_2,\cdots,u_n$ in
increasing order of their left endpoints. Then we have following two
observations.

\begin{observation}\label{ob1}
$u_1,u_2,\cdots,u_n$ is a ordering of an interval graph $G$ by the
increasing order of their left endpoints. If $u_iu_j\in E$ with
$j<i$, then $u_ju_k\in E$ for every $j+1\le k\le i$.
\end{observation}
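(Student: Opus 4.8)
The plan is to reduce the statement to two elementary inequalities between interval endpoints. First I would set up the dictionary between adjacency and endpoints: writing $I_t=[a_t,b_t]$ for the interval assigned to $u_t$, adjacency $u_su_t\in E$ is by definition equivalent to $I_s\cap I_t\neq\emptyset$; and since the vertices are indexed so that $a_1\le a_2\le\cdots\le a_n$, whenever $s\le t$ the two closed intervals $[a_s,b_s]$ and $[a_t,b_t]$ meet if and only if $a_t\le b_s$ (their intersection is then $[a_t,\min(b_s,b_t)]\neq\emptyset$, and they are disjoint otherwise).

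Next I would apply this to the hypothesis. Suppose $u_iu_j\in E$ with $j<i$. Since $a_j\le a_i$, the criterion above gives $a_i\le b_j$. Now fix any $k$ with $j+1\le k\le i$. From the chosen vertex ordering we have $a_j\le a_k\le a_i$, and combining the inequality $a_k\le a_i$ with $a_i\le b_j$ yields $a_k\le b_j$. Hence $a_j\le a_k\le b_j$, so the point $a_k$ lies in $I_j$; since trivially $a_k\in I_k$ as well, we get $a_k\in I_j\cap I_k$, so $I_j\cap I_k\neq\emptyset$ and therefore $u_ju_k\in E$, which is exactly what is claimed. (Note $k=i$ recovers the hypothesis and $k\neq j$ since $k\ge j+1$, so no self-loop issue arises.)

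I do not expect a serious obstacle here; the only point deserving a word of care is the convention about coinciding endpoints, i.e. whether two intervals that touch in a single point are declared adjacent. I would handle this either by invoking the standard normalization that an interval graph admits a representation in which all $2n$ endpoints are distinct, or simply by working throughout with closed intervals so that touching at one point already counts as an intersection. Under either convention the "$a_t\le b_s$" criterion is exact and the short argument above goes through verbatim.
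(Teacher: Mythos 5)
Your proof is correct: the reduction of adjacency to the endpoint inequality $a_t\le b_s$ (for $s\le t$ in left-endpoint order) and the chain $a_j\le a_k\le a_i\le b_j$ is exactly the standard argument, and your remark about the closed-interval convention handles the only delicate point. The paper states this as an Observation without any proof, so your write-up simply supplies the elementary verification the authors implicitly rely on.
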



Let $V_i=\{u_j~|~j\le i\}$ and $G[V_i]$ be an induced subgraph of
$G$. It is obvious that $G[V_n]=G$. Let $F(u_i)=u_j$, where $j=\min
\{k~|~u_ku_i\in E$ and $k<i\}$. In particular, $F(u_1)=u_1$. Let
$w(u_i)=u_j$, where $j=\max \{k~|~u_ku_i\not\in E$ and $k<i\}$. In
particular, If $w(u_i)$ does not exist, we assume that
$w(u_i)=u_0~(u_0\not\in V)$. $PD_i$ denotes a minimum
paired-dominating set of $G[V_i]$. In this paper, we only consider
connected interval graph.

\begin{observation}\label{ob2}
If $G$ is a connected interval graph, then $G[V_i]$ is also
connected.
\end{observation}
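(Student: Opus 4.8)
The plan is to prove the statement by induction on $i$. The base case $i=1$ is immediate, since $G[V_1]$ consists of the single vertex $u_1$. For the inductive step, suppose $G[V_{i-1}]$ is connected; I claim it suffices to show that $u_i$ has at least one neighbor in $V_{i-1}$. Indeed, $G[V_i]$ has vertex set $V_{i-1}\cup\{u_i\}$, the induced subgraph on $V_{i-1}$ is connected by the inductive hypothesis, and if $u_i$ is adjacent to some vertex of $V_{i-1}$ then every vertex of $V_i$ can be joined to every other one by a path lying in $G[V_i]$. Hence $G[V_i]$ is connected.

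To establish that $u_i$ (for $i\ge 2$) has a neighbor among $u_1,\dots,u_{i-1}$, I would argue by contradiction. Recall the vertices are ordered so that $a_1\le a_2\le\cdots\le a_n$. If $u_i$ had no neighbor in $V_{i-1}$, then for every $j\le i-1$ the intervals $I_i$ and $I_j$ are disjoint; since $a_j\le a_i$, disjointness forces $b_j<a_i$. But then for any $k\ge i$ we have $a_k\ge a_i>b_j$ for all $j\le i-1$, so $I_k\cap I_j=\emptyset$, i.e. $u_ku_j\notin E$. Thus no edge joins $V_{i-1}$ to $V\setminus V_{i-1}$; as both sets are nonempty ($u_1\in V_{i-1}$ and $u_i\in V\setminus V_{i-1}$), this contradicts the connectedness of $G$. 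Therefore $u_i$ does have a neighbor in $V_{i-1}$, which completes the induction.

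The crux of the argument --- more an observation than a genuine obstacle --- is noticing that, in the left-endpoint ordering, the failure of $u_i$ to have an earlier neighbor is never a local accident: it means $I_i$ begins strictly after every earlier interval ends, and hence so does every later interval $I_k$ with $k\ge i$, since $a_k\ge a_i$. This ``single cut'' phenomenon, which is exactly the kind of monotonicity captured by Observation \ref{ob1} and the choice of ordering, is what converts a missing local edge into a global disconnection, and it is the only place where the ordering is really used.
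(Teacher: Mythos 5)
Your proof is correct. The paper states this as an Observation and gives no proof at all, so there is nothing to compare against; your induction, reducing the claim to the fact that in the left-endpoint ordering a vertex $u_i$ with no earlier neighbor would force all intervals $I_j$ with $j<i$ to end before $a_i$ and hence disconnect $G$, is exactly the standard justification one would supply.
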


\begin{lemma}\label{lem9}
If $F(u_i)\neq u_i$ and $F(F(u_i))=F(u_i)$, then
$PD_i=\{u_i,F(u_i)\}$.
\end{lemma}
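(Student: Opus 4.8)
We work with the vertex ordering $u_1,\dots,u_n$ of the connected interval graph $G$ by increasing left endpoints, and with the functions $F$ and $w$ as defined just before the statement. Recall $F(u_i)=u_j$ with $j=\min\{k:u_ku_i\in E,\ k<i\}$, and $G[V_i]$ is connected by Observation~\ref{ob2}. The hypothesis $F(u_i)\neq u_i$ says $u_i$ is not the first vertex and has at least one earlier neighbour; the hypothesis $F(F(u_i))=F(u_i)$ says that $u_j:=F(u_i)$ has \emph{no} neighbour preceding it, i.e. $u_j$ is adjacent to none of $u_1,\dots,u_{j-1}$.

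\textbf{Plan.} The claim $PD_i=\{u_i,F(u_i)\}$ should be read as: $\{u_i,u_j\}$ is a minimum paired-dominating set of $G[V_i]$. First I would show it is a paired-dominating set. Since $u_iu_j\in E$, the induced subgraph on $\{u_i,u_j\}$ is $K_2$ and has a perfect matching, so only domination needs checking. The key structural point is that every vertex of $V_i$ is adjacent to $u_i$ or equals $u_i$; equivalently $N[u_i]\supseteq V_i$ inside $G[V_i]$. This is where $F(F(u_i))=F(u_i)$ does the work: take any $u_k$ with $k<i$. If $k\ge j$, then by Observation~\ref{ob1} applied to the edge $u_iu_j$ (with $j<i$), every index strictly between $j$ and $i$ — hence $u_k$ — is adjacent to $u_i$; and $k=j$ gives $u_j$ itself. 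If instead $k<j$, I claim this cannot happen for a vertex that fails to be dominated: since $u_j$ has no earlier neighbour at all, $u_j$ is not adjacent to $u_k$; but $G[V_i]$ is connected, so I must argue every such $u_k$ with $k<j$ is adjacent to $u_i$. The cleanest route: because $a_k\le a_j$ (as $k<j$) and $u_k\not\sim u_j$, the interval $I_k$ lies entirely to the left of $I_j$, so $b_k<a_j$; but then no interval with left endpoint $\ge a_j$ meets $I_k$, and since the ordering is by left endpoints, $u_k$ is adjacent only to intervals among $u_1,\dots,u_{j-1}$ plus possibly some with left endpoint in $(a_k,a_j)$ — none of which is $u_i$. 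This would disconnect $V_i$ unless no such $k$ exists. So the real content is: \emph{$F(F(u_i))=F(u_i)$ together with connectivity of $G[V_i]$ forces $j=\min\{k:u_k\in V_i\}$ to be essentially $1$, or more precisely forces every vertex of $V_i$ into $N[u_i]$.} I would nail this down with the interval geometry: $u_j=F(u_i)$ having no predecessor neighbour means $a_j$ is small enough that $I_j$ meets $I_1$ (by connectivity of $G[V_j]$, in fact $u_j\sim u_1$ unless $j=1$), and then chase that every interval indexed $\le i$ contains the point $a_i$ or meets $I_i$.

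\textbf{Minimality.} Any paired-dominating set has even size $\ge 2$, so a set of size $2$ is automatically minimum once we know $G[V_i]$ has no paired-dominating set of size $0$ — which holds because $G[V_i]$ is nonempty. Hence exhibiting the paired-dominating set $\{u_i,F(u_i)\}$ of size $2$ finishes minimality immediately; this part is routine.

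\textbf{Main obstacle.} The only non-trivial step is the domination claim $V_i\subseteq N[u_i]$ in $G[V_i]$, and specifically ruling out a vertex $u_k$ with $k<j=F(u_i)$ that is neither adjacent to $u_i$ nor to $u_j$. I expect the hypothesis $F(F(u_i))=F(u_i)$ to be exactly what collapses this case, via the interval-endpoint argument sketched above (or, more combinatorially, via Observation~\ref{ob1} plus connectivity of $G[V_j]$ to show $u_j$ must be adjacent to $u_1$, hence $j$ behaves like the leftmost vertex and $u_i$'s neighbourhood sweeps all of $V_i$). Everything else — the $K_2$ matching, the parity bound for minimality — is immediate.
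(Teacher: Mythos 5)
There is a genuine gap, and it stems from a misreading of Observation~\ref{ob1}. That observation says that if $u_iu_j\in E$ with $j<i$, then the \emph{earlier} endpoint $u_j$ is adjacent to every $u_k$ with $j+1\le k\le i$ (because $a_j\le a_k\le a_i\le b_j$); it does \emph{not} say that $u_i$ is adjacent to those $u_k$. Your stated ``key structural point,'' namely $V_i\subseteq N[u_i]$, is therefore false in general: take intervals $I_1=[0,10]$, $I_2=[1,2]$, $I_3=[3,4]$ and $i=3$; here $F(u_3)=u_1$, $F(F(u_3))=u_1$, but $u_2\notin N[u_3]$. The set $\{u_3,u_1\}$ still dominates because $u_1$ does all the work. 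So the vertex whose closed neighbourhood must be shown to cover $V_i$ is $F(u_i)$, not $u_i$, and the case analysis you build around $N[u_i]$ (including the worry about vertices $u_k$ with $k<j$ not adjacent to $u_i$) is aimed at the wrong target and cannot be completed as written.

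The intended argument is much shorter. Write $u_j=F(u_i)$. If $j>1$, then $F(u_j)=u_j$ means $u_j$ has no neighbour among $u_1,\dots,u_{j-1}$, so $u_j$ would be isolated in $G[V_j]$, contradicting the connectivity of $G[V_j]$ guaranteed by Observation~\ref{ob2}. Hence $j=1$. Now apply Observation~\ref{ob1} to the edge $u_1u_i$: $u_1u_k\in E$ for every $1<k\le i$, so $u_1$ alone dominates $V_i$, and $\{u_i,u_1\}=\{u_i,F(u_i)\}$ induces a $K_2$ and is a paired-dominating set of $G[V_i]$ of size $2$, which is trivially minimum (your parity/minimality remark is fine). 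You gesture at this route in your last sentence (``$u_j$ must be adjacent to $u_1$''), but even there you conclude with ``$u_i$'s neighbourhood sweeps all of $V_i$,'' which repeats the same error; the correct conclusion is that $u_1=F(u_i)$'s neighbourhood sweeps all of $V_i$.
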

\begin{proof}
Since $F(F(u_i))=F(u_i)$, hence $F(u_i)=u_1$. By Observation
\ref{ob1}, for every $1<j\le i$, $u_1u_j\in E$. So
$\{u_i,u_1\}=\{u_i,F(u_i)\}$ is a minimum  paired-dominating set of
$G[V_i]$. Hence, $PD_i=\{u_i,F(u_i)\}$. $\Box$
\end{proof}

\begin{lemma}\label{lem6}
$|PD_{i+1}|\ge |PD_i|$ for $ 2\le i\le n-1$.
\end{lemma}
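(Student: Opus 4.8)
The plan is to show that a minimum paired-dominating set of the smaller graph $G[V_i]$ can always be extended to a paired-dominating set of $G[V_{i+1}]$, so that $|PD_{i+1}| \le |PD_i| + (\text{constant})$ is not what we want — rather, we want the reverse-type inequality, namely that $PD_i$ never costs more than $PD_{i+1}$. The natural strategy is therefore: start from a minimum paired-dominating set $S$ of $G[V_{i+1}]$ and produce from it a paired-dominating set $S'$ of $G[V_i]$ with $|S'| \le |S|$, which immediately gives $|PD_i| \le |S| = |PD_{i+1}|$, i.e. $|PD_{i+1}| \ge |PD_i|$.

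First I would observe that $S \cap V_i$ dominates every vertex of $V_i$ except possibly those whose only dominators in $S$ lie in $\{u_{i+1}\}$ (since $V_{i+1} = V_i \cup \{u_{i+1}\}$ and the only new vertex is $u_{i+1}$). So the only obstruction to $S \cap V_i$ being a paired-dominating set of $G[V_i]$ is (a) vertices of $V_i$ dominated in $G[V_{i+1}]$ only by $u_{i+1}$, and (b) the perfect matching condition: if $u_{i+1} \in S$, removing it breaks the pair containing $u_{i+1}$. I would handle this by a small surgery. If $u_{i+1} \notin S$, then $S$ is already a subset of $V_i$; using Observation \ref{ob1} and Observation \ref{ob2} (connectivity of $G[V_i]$), any vertex of $V_i$ adjacent to $u_{i+1}$ is also adjacent to a range of earlier vertices, and one checks that $S$ still dominates $V_i$ — if not, one swaps a guard to cover the gap without increasing cardinality. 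If $u_{i+1} \in S$, let $u_{i+1}$ be paired with some $u_p \in S$; I would replace the pair $\{u_{i+1}, u_p\}$ by a pair inside $V_i$ that dominates at least as much, using the interval structure: since $p \le i$, $u_p$ has neighbours among $V_i$, and by Observation \ref{ob1} the closed neighbourhood of $u_{i+1}$ restricted to $V_i$ is an "interval" of the ordering that is covered by $N[u_p] \cap V_i$ together with one more suitably chosen vertex. This yields $S'$ with $|S'| = |S|$ (or smaller), paired-dominating $G[V_i]$.

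The key steps in order are: (1) reduce to exhibiting, from any minimum paired-dominating set $S$ of $G[V_{i+1}]$, a paired-dominating set $S'$ of $G[V_i]$ with $|S'| \le |S|$; (2) dispose of the easy case $u_{i+1} \notin S$ using Observations \ref{ob1} and \ref{ob2}; (3) in the case $u_{i+1} \in S$, identify the pair $\{u_{i+1}, u_p\}$ and describe the replacement that restores both domination of $V_i$ and the perfect matching; (4) verify the cardinality bound is preserved and conclude $|PD_{i+1}| \ge |PD_i|$.

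The main obstacle I expect is step (3): when $u_{i+1} \in S$ and its mate $u_p$ is "needed" to dominate a vertex of $V_i$ that no other guard reaches, one must argue that the interval-graph structure (specifically Observation \ref{ob1}, which says neighbourhoods in the left-endpoint ordering are intervals) forces the existence of a replacement vertex in $V_i$ that, paired with $u_p$, covers everything $u_{i+1}$ was covering inside $V_i$; edge cases where $u_p$ itself is near the end of the ordering, or where $u_{i+1}$ was the unique dominator of several scattered-looking vertices, need the interval property to collapse into a single contiguous block. Handling the matching parity (we remove two vertices $u_{i+1}, u_p$ and add at most two back, maintaining a perfect matching on the induced subgraph) is routine once the domination part is settled, and I would lean on the fact that within any clique one can freely re-pair vertices.
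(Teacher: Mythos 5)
Your overall strategy is the same as the paper's: start from a minimum paired-dominating set $S=PD_{i+1}$ of $G[V_{i+1}]$ and perform a local surgery on the pair containing $u_{i+1}$ to obtain a paired-dominating set of $G[V_i]$ of no larger size. Your case $u_{i+1}\notin S$ is even easier than you make it: then $S\subseteq V_i$, $S$ dominates $V_{i+1}\supseteq V_i$, and $G[S]$ is unchanged, so $S$ is already a paired-dominating set of $G[V_i]$ with no swap needed.

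The genuine gap is in your step (3), and it is twofold. First, the structural fact you invoke is false: in the left-endpoint ordering, $N[u_{i+1}]\cap V_i$ need \emph{not} be a contiguous block of indices (a long early interval can meet $I_{i+1}$ while shorter intermediate intervals do not), so you cannot argue that ``scattered-looking'' dominated vertices ``collapse into a single contiguous block.'' The fact you actually need, and which is exactly what Observation~\ref{ob1} gives, is the opposite inclusion: every $u_t\in N(u_{i+1})$ with $t\le i$ satisfies $u_tu_s\in E$ for all $t+1\le s\le i+1$, in particular $u_tu_i\in E$; hence $N(u_{i+1})\cap V_i\subseteq N[u_i]$, so $u_i$ single-handedly absorbs all the domination work $u_{i+1}$ did inside $V_i$. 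Second, your plan to replace the \emph{entire} pair $\{u_{i+1},u_p\}$ by a new pair is both unnecessary and harder to close out, since you would then have to re-establish domination of everything $u_p$ covered. The paper instead keeps $u_p$ and treats two subcases: if $u_{i+1}$ is paired with $u_i$, either $N_{G[V_i]}(u_i)\subseteq PD_{i+1}$ (then delete the whole pair $\{u_i,u_{i+1}\}$; domination survives because every vertex dominated only by $u_i$ is itself in the set) or swap $u_{i+1}$ for some $w\in N_{G[V_i]}(u_i)\setminus PD_{i+1}$ and pair $u_i$ with $w$; if $u_{i+1}$ is paired with $u_k$, $k<i$, then either swap $u_{i+1}$ for $u_i$ (adjacent to $u_k$ by Observation~\ref{ob1}) or, when $u_i$ is already in the set paired with some $u_l$, delete $\{u_i,u_{i+1}\}$ and re-pair $u_k$ with $u_l$. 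Without the inclusion $N(u_{i+1})\cap V_i\subseteq N[u_i]$ none of these swaps can be verified, so as written your argument does not go through.
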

\begin{proof}
If $u_{i+1}\not\in PD_{i+1}$, then $PD_{i+1}$ is also a
paired-dominating set of $G[V_i]$. So $|PD_{i+1}|\ge |PD_i|$. If
$u_{i+1}\in PD_{i+1}$ and $u_k,u_{i+1}$ are paired in $PD_{i+1}$. We
consider
two cases.\\
{\sl Case 1: $k=i$. }\\
That is $u_i,u_{i+1}$ are paired in $PD_{i+1}$. If
$N_{G[V_i]}(u_i)\subseteq PD_{i+1}$, then $PD_{i+1}-\{u_i,u_{i+1}\}$
is  a paired-dominating set of $G[V_i]$. So
$|PD_{i+1}|>|PD_{i+1}|-2\ge |PD_i|$. If there is a vertex $w\in
N_{G[V_i]}(u_i)$ with $w\not\in PD_{i+1}$, then
$PD_{i+1}-\{u_{i+1}\}\cup \{w\}$ is also a paired-dominating set
of $G[V_i]$. We also get $|PD_{i+1}|\ge |PD_i|$.\\
{\sl Case 2: $k<i$.}\\
If $u_i\not\in PD_{i+1}$, then $PD_{i+1}-\{u_{i+1}\}\cup \{u_i\}$ is
a paired-dominating set of $G[V_i]$, so $|PD_{i+1}|\ge |PD_i|$. If
$u_i\in PD_{i+1}$ and $u_i$ is paired with $u_l$, then $u_lu_k\in E$
and $PD_{i+1}-\{u_i,u_{i+1}\}$ is a paired-dominating set of
$G[V_i]$, so $|PD_{i+1}|>|PD_{i+1}|-2\ge |PD_i|$. $\Box$
\end{proof}

\begin{lemma}\label{lem7}
Let  $F(u_i)=u_k$ and $F(u_k)=u_j$ with $j<k<i$.\\
\begin{tabular}{ll}
$(1)$ & $PD_i=PD_l\cup \{u_j,u_k\}$~ if~ $w(u_j)=u_l$ with $l\ge 2$.\\
$(2)$ & $PD_i=\{u_1,u_2, u_j,u_k\}$~ if~ $w(u_j)=u_1$.\\
 $(3)$ & $PD_i=\{u_j,u_k\}$~ if~ $w(u_j)=u_0$.
\end{tabular}
\end{lemma}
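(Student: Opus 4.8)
The plan is to analyze the structure forced by the hypotheses $F(u_i)=u_k$ and $F(u_k)=u_j$ with $j<k<i$, and then use the same strategy as in Lemma \ref{lem9}: identify a small paired-dominating set that dominates everything in the relevant vertex subset, and then argue no smaller one exists. First I would unpack what $F(u_i)=u_k$ means via Observation \ref{ob1}: since $u_k$ is the smallest-indexed neighbour of $u_i$, the vertices $u_{k}, u_{k+1},\ldots,u_i$ are pairwise adjacent (they all meet $I_i$), so $\{u_j,u_k\}$ dominates $u_k,\ldots,u_i$ because $u_k$ is adjacent to all of them. Similarly $F(u_k)=u_j$ forces $u_j,u_{j+1},\ldots,u_k$ to be pairwise adjacent, so $\{u_j,u_k\}$ also dominates $u_j,\ldots,u_{k-1}$. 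Hence $\{u_j,u_k\}$ dominates all of $u_j,u_{j+1},\ldots,u_i$, and the only vertices of $G[V_i]$ it might fail to dominate are those $u_t$ with $t<j$ that are non-adjacent to both $u_j$ and $u_k$; by the definition of $w(u_j)$, the largest such index is exactly $w(u_j)$ (note $u_t$ non-adjacent to $u_j$ implies $t < j$, and such a $u_t$ is automatically non-adjacent to $u_k$ since $k>j$ and the left endpoints are increasing — I would verify this using Observation \ref{ob1} applied to $u_j$).

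With that structural picture, the three cases fall out. In case $(3)$, $w(u_j)=u_0$ means every vertex $u_t$ with $t<j$ is adjacent to $u_j$, so $\{u_j,u_k\}$ dominates all of $V_i$; it induces an edge, hence is a paired-dominating set, and since any paired-dominating set has at least two vertices, $PD_i=\{u_j,u_k\}$. In case $(2)$, $w(u_j)=u_1$ says $u_1$ is the (unique, by maximality) vertex not adjacent to $u_j$; then $\{u_1,u_2,u_j,u_k\}$ is a paired-dominating set once I check $u_1u_2\in E$ and $u_2$ together with $u_j,u_k$ covers everything — here I would note $u_1u_2\in E$ by connectivity (Observation \ref{ob2}) applied to $G[V_2]$, and that a set of size $2$ cannot work because no single edge dominates both $u_1$ and all of $u_k,\ldots,u_i$ when $u_1$ is isolated from the $u_j$-clique; this forces $|PD_i|\ge 4$. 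In case $(1)$, $w(u_j)=u_l$ with $l\ge 2$: I claim $PD_i = PD_l \cup \{u_j,u_k\}$. The inclusion "$\le$" holds because $PD_l$ paired-dominates $G[V_l]$, and $V_l$ contains exactly the vertices possibly missed by $\{u_j,u_k\}$ (namely those non-adjacent to $u_j$, all of index $\le l$), while $u_l \in V_l$ is dominated by $PD_l$; so the union is a paired-dominating set of $G[V_i]$, giving $|PD_i|\le |PD_l|+2$. For the reverse inequality I would take a minimum paired-dominating set $S$ of $G[V_i]$ and show it must "spend" at least two vertices to dominate $u_i$ and at least $|PD_l|$ vertices to dominate the part of $V_l$ lying below $u_j$; the disjointness comes from the fact that no vertex dominating $u_i$ (i.e. in $N[u_i]$) can dominate $u_l$ or anything non-adjacent to $u_j$, since $N[u_i]\subseteq \{u_k,\ldots\}$ consists of vertices with index $\ge k > j > l$, all adjacent to $u_j$. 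A short exchange argument — replacing the two $S$-vertices near $u_i$ by $\{u_j,u_k\}$ and restricting the rest to $V_l$ — then yields $|PD_l|\le |S|-2$, hence equality.

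The main obstacle I anticipate is case $(1)$, specifically making the disjointness/exchange argument fully rigorous: I need to be careful that the portion of $S$ used for $V_l$ really is a \emph{paired}-dominating set of $G[V_l]$ after the exchange (the matching must still be perfect), which may require re-pairing a vertex whose partner was outside $V_l$, exactly in the style of the edge-swapping arguments in the proof of Lemma \ref{lem3}. I would handle this by the same device used there: if some $u_a\in S\cap V_l$ is paired with $u_b\notin V_l$, then $u_b\in N[u_i]$ forces $u_b$ adjacent to $u_j$ but not necessarily to $u_a$; I instead show $u_a$ can be re-paired within $V_l$ (or absorbed), using that $G[V_l]$ is connected by Observation \ref{ob2} and that $S$ restricted to $V_l$ already dominates $G[V_l]$. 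Once the pairing is repaired, counting gives the bound and all three formulas follow.
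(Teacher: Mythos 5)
Your structural analysis (which vertices $\{u_j,u_k\}$ dominate, why the only possibly undominated vertices are the non-neighbours of $u_j$ with index below $j$, and cases $(2)$ and $(3)$) matches the paper and is sound. The genuine gap is in the lower bound $|PD_i|\ge |PD_l|+2$ of case $(1)$. You propose to take a minimum paired-dominating set $S$ of $G[V_i]$, set aside the pair that serves $u_i$, and then ``restrict the rest to $V_l$'' to obtain a paired-dominating set of $G[V_l]$ of size $|S|-2$. That restriction need not dominate $G[V_l]$: a vertex $u_t\in V_l$ non-adjacent to $u_j$ can be dominated in $S$ only by vertices $u_s$ with $l<s<j$ (Observation \ref{ob1} forces its dominators to have index $<j$, but not index $\le l$), and those vertices vanish when you intersect with $V_l$; separately, a vertex of $S\cap V_l$ may be matched to a partner outside $V_l$. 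Your re-pairing device addresses only the second problem, not the first, and there is no reason the discarded $S$-vertices with indices in $(l,j)$ can be compensated for inside $V_l$ without increasing the count.

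The paper avoids this by never restricting to $V_l$. It takes the last vertex $u_{i_1}\in PD_i$ dominating $u_i$, paired with $u_{k_1}$ (showing $i_1\ge k$ and $k_1\ge j$), deletes that one pair, and argues that what remains is a paired-dominating set of $G[V_c]$ (or $G[V_{l'}]$) for an index $c$ (resp.\ $l'$) that is at least $l$ but possibly much larger; the step from $|PD_i|-2\ge |PD_c|$ down to $|PD_i|-2\ge |PD_l|$ is then supplied by the monotonicity statement $|PD_{i+1}|\ge |PD_i|$ of Lemma \ref{lem6}, which your proposal never invokes. That monotonicity is the missing ingredient: without it, knowing that $|S|-2$ vertices paired-dominate some prefix $G[V_c]$ with $c\ge l$ does not yield $|S|-2\ge |PD_l|$, and knowing only that some sub-collection of $S$ dominates the non-neighbours of $u_j$ does not yield it either. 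To repair your argument you would either have to prove and use Lemma \ref{lem6} together with the ``last dominator of $u_i$'' device, or exhibit directly a paired-dominating set of $G[V_l]$ of size $|S|-2$, which the naive restriction does not provide.
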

\begin{proof}
$(1)$~ It is obvious that $PD_l\cup \{u_j,u_k\}$ is a
paired-dominating set of $G[V_i]$. It is sufficient to prove
$|PD_i|\ge |PD_l|+2$. Since $F(u_i)=u_k$, there must exist a vertex
$u_{i_1}\in PD_i$ with $k\le i_1\le i$, which dominates $u_i$. We
may assume that $u_{i_1}$ is the last vertex in $PD_i$ which
dominates $u_i$ and  $u_{i_1}$ is paired with $u_{k_1}$. It is
obvious that $k_1\ge j$. Let $l'=\min \{a,~b\}$, where
$w(u_{i_1})=u_a$ and $w(u_{k_1})=u_b\}$. Since $u_lu_{k_1}\not\in E$
and $u_lu_{i_1}\not\in E$~(otherwise $u_lu_j\in E$, a
contradiction), so $l'\ge l\ge 2$. Let $u_c$ is the last vertex in
$PD_i-\{u_{i_1},u_{k_1}\}$. If $c\ge l'$, then
$PD_i-\{u_{i_1},u_{k_1}\}$ is a paired-dominating set of $G[V_c]$.
So $|PD_i|-2\ge |PD_c|$. On the other hand, since $c\ge l'\ge l$, by
Lemma \ref{lem6}, $|PD_c|\ge |PD_l|$. Then $|PD_i|\ge
|PD_l|+2$. 
If $c<l'$, then $PD_i-\{u_{i_1},u_{k_1}\}$ is a paired-domination
set of $G[V_{l'}]$. Since $l'\ge l$, $|PD_i|-2\ge |PD_{l'}|\ge
|PD_l|$. So $|PD_i|\ge |PD_l|+2$. Thus $PD_i=PD_l\cup
\{u_j,u_k\}$.

$(2)$~ Note that $u_1u_2\in E$ and $j\ge 3$ in this situation. So
it is  easy to know that $PD_i=\{u_1,u_2, u_j,u_k\}$~ if~
$w(u_j)=u_1$.

$(3)$~ It is obvious. \ \ \ \ $\Box$
\end{proof}
\vskip 0.2cm
 Now we give  an intuitive algorithm  for determining
a minimum paired-dominating set in interval graphs. \vskip 0.2cm
\noindent{\bf Algorithm MPDI.} Find a minimum paired-dominating set
of an interval graph.\\
{\bf Input.} An interval graph $G=(V,E)$ with a vertex ordering
$u_1,u_2,\cdots,u_n$ ordered by the increasing order of left
endpoints, in which each vertex $u_i$ has a label $D(u_i)=0$. Let
$F(u_i)=u_j~(F(u_1)=u_1)$ such
that $j=\min \{k~|~u_ku_i\in E~and~k<i\}$.\\
{\bf Output.} A minimum paired-dominating set $PD$ of $G$.

\noindent{\bf Method.}\\
\hspace*{4mm} $PD=\emptyset$;\\
\hspace*{4mm} For $i=n$ to $1$ do\\
\hspace*{8mm} If ($D(u_i)=0$) then\\
\hspace*{12mm} If ($F(u_i)\neq u_i$ and
$F(F(u_i))\neq F(u_i)$) then\\
\hspace*{16mm} $PD=PD\cup \{F(u_i),F(F(u_i))\}$;\\
\hspace*{16mm} $D(u)=1$ for every vertex $u\in N[F(u_i)]$;\\
\hspace*{16mm} $D(w)=1$ for every vertex $w\in N[F(F(u_i))]$;\\
\hspace*{12mm} else if ($F(u_i)\neq u_i$) then\\
\hspace*{16mm} $PD=PD\cup \{u_i,F(u_i)\}$;\\
\hspace*{16mm} $D(u)=1$ for every vertex $u\in N[F(u_i)]$;\\
\hspace*{12mm} else \\
\hspace*{16mm} $PD=PD\cup \{u_i,u_2\}$;\\
\hspace*{16mm} $D(u_i)=1$;\\
\hspace*{12mm} endif \\
\hspace*{8mm} endif \\
\hspace*{4mm} endfor

\begin{theorem}
Given a vertex ordering ordered by the increasing order of left
endpoints, the algorithm MPDI can produce a minimum
paired-dominating set of $G$ in $O(m+n)$, where $m=|E(G)|$ and
$n=|V(G)|$.
\end{theorem}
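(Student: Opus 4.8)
The plan is to establish correctness and the running time separately, following the same inductive structure used for the block-graph case. For correctness, I would prove by backward induction on $i$ (from $n$ down to $1$) that when the loop in \textbf{MPDI} has finished processing $u_i$, the set $PD$ constructed so far, together with the intervals that remain undominated (those $u_j$ with $j < i$ and $D(u_j)=0$), is compatible with \emph{some} minimum paired-dominating set of $G$; more precisely, that $PD$ restricted to $V_i \cup \{\text{vertices } u_j : j > i \text{ already forced}\}$ can be extended to a $PD_n = PD(G)$. The base of the induction, when $i = n$ and $D(u_n) = 0$, is exactly the case analysis of Lemma~\ref{lem9} and Lemma~\ref{lem7}: if $F(u_n) \ne u_n$ and $F(F(u_n)) \ne F(u_n)$ we are in the situation $F(u_i)=u_k$, $F(u_k)=u_j$ of Lemma~\ref{lem7}, so adding $\{F(u_n), F(F(u_n))\} = \{u_k, u_j\}$ is the correct first move and the remaining problem collapses to $G[V_l]$ where $w(u_j) = u_l$; if $F(u_n) \ne u_n$ but $F(F(u_n)) = F(u_n)$, Lemma~\ref{lem9} gives $PD_n = \{u_n, F(u_n)\}$ directly; and if $F(u_n) = u_n$ then $u_n$ is isolated in $G[V_n]$ — impossible here since $G$ is connected with $n \ge 2$ — so that branch is vacuous at the top level and only arises deeper in the recursion, where it corresponds to $w(u_j) = u_0$ or $w(u_j) = u_1$ in Lemma~\ref{lem7}(2),(3), handled by pairing with $u_2$.

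The inductive step is the heart of the argument. Suppose we have processed $u_n, \dots, u_{i+1}$ correctly; consider $u_i$. If $D(u_i) = 1$, the vertex is already dominated by a vertex placed in $PD$ during an earlier (higher-index) iteration, the algorithm does nothing, and the invariant persists trivially. If $D(u_i) = 0$, then no $u_k$ with $k > i$ that was placed in $PD$ dominates $u_i$; by Observation~\ref{ob1} the neighbours of $u_i$ among $\{1,\dots,i\}$ form a suffix $\{F(u_i), F(u_i)+1, \dots, i\}$, and among these only $u_i$ and vertices that may yet be added can dominate $u_i$ — so $u_i$ must be dominated "locally." The key point, which I would extract as the analogue of Lemma~\ref{lem7}, is that there is a minimum paired-dominating set of $G$ in which $u_i$ is dominated either by $F(u_i)$ paired with $F(F(u_i))$, or (in the degenerate cases) by $u_i$ itself paired with $F(u_i)$ or with $u_2$. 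Greedily choosing $F(u_i)$ and $F(F(u_i))$ is safe because these two intervals reach as far left as possible among neighbours-of-neighbours of $u_i$, so any other valid choice can be exchanged for them without increasing cardinality and without un-dominating anything to the right (everything to the right of $u_i$ is already dominated by inductive hypothesis). Marking $N[F(u_i)] \cup N[F(F(u_i))]$ as dominated then exactly reproduces the passage to $G[V_l]$ in Lemma~\ref{lem7}(1), and the induction continues.

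For the running time, I would argue as in the block-graph theorem: the arrays $F(u_i)$ can be precomputed in $O(m+n)$ time by one scan of the sorted endpoint list (this is standard for interval graphs, and $F(F(u_i))$ is then a single lookup); the main loop runs $n$ iterations, and the work in each iteration is proportional to the degrees of $F(u_i)$ and $F(F(u_i))$ for the $D(u)=1$ updates, so the total update cost is $O\bigl(\sum_v \deg(v)\bigr) = O(m)$ since each closed neighbourhood is touched a bounded number of times. Hence the algorithm runs in $O(m+n)$. The main obstacle I anticipate is not the running time but pinning down the exchange argument in the inductive step cleanly — specifically, verifying that when we force $\{F(u_i), F(F(u_i))\}$ into $PD$, the "remainder" graph is genuinely $G[V_l]$ with $l$ as in Lemma~\ref{lem7} and that no vertex strictly between $u_l$ and $u_i$ is left stranded; this requires carefully using Observation~\ref{ob1} to control which intervals the new pair dominates, and handling the boundary cases $w(u_j) \in \{u_0, u_1\}$ (the pairing with $u_2$) without off-by-one errors.
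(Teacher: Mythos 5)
Your proposal takes essentially the same route as the paper: the paper's proof of this theorem is a one-liner that appeals directly to Lemma~\ref{lem9} and Lemma~\ref{lem7} for correctness and to a constant-number-of-scans argument for the $O(m+n)$ bound, which is exactly the structure you describe. If anything, your write-up is more careful than the paper's, since you explicitly flag the step the paper leaves implicit — verifying that after forcing $\{F(u_i),F(F(u_i))\}$ into $PD$ the residual problem really is $G[V_l]$ as in Lemma~\ref{lem7}(1).
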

\begin{proof}
By  Lemmas \ref{lem9} and  \ref{lem7}, we know that the algorithm
MPDI can produce a minimum paired-dominating set of an interval
graph $G$. Since each vertex and edge are scanned in a constant
number, hence the algorithm MPDI can finish in $O(m+n)$, where
$m=|E(G)|$ and $n=|V(G)|$.  $\Box$
\end{proof}

\section{NP-completeness of paired-domination problem}
A graph is {\sl chordal} if every cycle of length greater than
three has a chord, i.e. an edge jointing two nonconsecutive
vertices in the cycle. A graph is {\sl split} if its vertex set
can be partitioned into a stable set and a clique.  Obviously,
block graphs, interval graphs and split graphs are three
subclasses of chordal graphs.
 This section establishes {\em NP}-complete results for the
paired-domination problem in bipartite graphs and  chordal graphs.
The transformation is from the vertex cover problem, which is
known to be {\em NP}-complete. The {\sl vertex cover problem} is
for a given nontrivial graph and a positive integer $k$ to answer
if there is a vertex set of size at most $k$ such that each edge
of the graph has at least one end vertex in this set.

\begin{theorem}\label{thm1}
Paired-domination problem is  NP-complete for bipartite graphs.
\end{theorem}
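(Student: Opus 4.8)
The plan is to argue membership in NP and then to reduce from the vertex cover problem. Membership is routine: given a candidate set $S$, one checks in polynomial time both that $S$ is a dominating set and, using any maximum matching algorithm, that $G[S]$ admits a perfect matching.

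For the reduction, let $(H,k)$ be an instance of the vertex cover problem, with $H=(V,E)$, $V=\{v_1,\dots,v_n\}$, and assume (without loss of generality) that $H$ has no isolated vertex. I would build a graph $G$ as follows: keep the vertices $v_1,\dots,v_n$; for each edge $e=v_iv_j\in E$ add a vertex $x_e$ adjacent precisely to $v_i$ and $v_j$; and to each $v_i$ attach a pendant path on three new vertices, i.e. add $a_i,b_i,c_i$ with edges $v_ia_i$, $a_ib_i$, $b_ic_i$. One checks at once that $G$ is bipartite with parts $V\cup\{b_i:1\le i\le n\}$ and $\{a_i,c_i:1\le i\le n\}\cup\{x_e:e\in E\}$, that $G$ has no isolated vertex, and that $G$ is built in linear time. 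The target value of the transformation will be $2n+2k$, and the core of the argument is the claim that $\gamma_p(G)=2n+2\tau(H)$, where $\tau(H)$ denotes the vertex cover number of $H$.

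For the upper bound, given a minimum vertex cover $C$ of $H$ I would take $S=\bigcup_{v_i\in C}\{v_i,a_i,b_i,c_i\}\cup\bigcup_{v_i\notin C}\{a_i,b_i\}$, matched by the edges $v_ia_i,\ b_ic_i$ when $v_i\in C$ and by $a_ib_i$ when $v_i\notin C$; this is a perfect matching of $G[S]$, and $S$ dominates $G$ — the only place where $C$ being a cover is used is to dominate each $x_e$ — so $\gamma_p(G)\le 2n+2|C|=2n+2\tau(H)$. For the lower bound, given any paired-dominating set $S$ set $C=S\cap V$, $X=S\cap\{x_e:e\in E\}$, and $d_i=|S\cap\{a_i,b_i,c_i\}|$. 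Since $c_i$ is a leaf, dominating and then matching $c_i$ (and, in the other case, $b_i$) forces $d_i\ge 2$ for every $i$; since $N_G[x_e]=\{x_e,v_i,v_j\}$ and an $x_e\in S$ must be matched to $v_i$ or $v_j$, every edge of $H$ has an endpoint in $C$, so $C$ is a vertex cover; and if $v_i\in C$ then $v_i$ must be matched either to $a_i$, which (to still dominate and match $c_i$) forces $\{b_i,c_i\}\subseteq S$ and hence $d_i\ge 3$, or to some $x_e$, which is then a private element of $X$ reserved for $v_i$. Summing, $|S|=|C|+|X|+\sum_i d_i\ge 2n+2|C|\ge 2n+2\tau(H)$. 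Hence $\gamma_p(G)=2n+2\tau(H)$, and $\tau(H)\le k$ iff $\gamma_p(G)\le 2n+2k$, completing the reduction.

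The main obstacle is the lower bound, and within it the rigidity analysis of the pendant gadget. The delicate feature of paired-domination is that placing a useful vertex $v_i$ into $S$ is typically no more expensive than omitting it, so one needs a gadget in which $v_i\in S$ provably costs one extra vertex; a shorter pendant such as the path $v_i,a_i,b_i$ already fails here. The careful bookkeeping — showing that the perfect-matching requirement on $a_i,b_i,c_i$ forces $d_i\ge 3$ exactly when $v_i$ is matched inside its gadget, and that an $x_e$ used as a partner cannot be shared by two distinct cover vertices — is where essentially all the work lies; once it is in place, both inequalities, and therefore the claimed equivalence, follow immediately.
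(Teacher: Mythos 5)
Your proof is correct, but it takes a genuinely different route from the paper's. The paper reduces from vertex cover by taking two disjoint copies $V_1,V_2$ of $V(H)$ and two copies $E_1,E_2$ of $E(H)$, joining $V_1$ to $V_2$ completely, and attaching each edge-vertex $e_k^i$ to the copies of its endpoints in $V_i$; a short exchange argument (using the harmless assumption $k\le n-1$) pushes all edge-vertices out of a paired-dominating set, after which $PD\cap V_1$ projects to a vertex cover, giving the clean equivalence ``cover of size $\le k$ iff paired-dominating set of size $\le 2k$.'' You instead use a single copy of $H$, subdivide every edge by a vertex $x_e$, and hang a pendant path $a_i,b_i,c_i$ on each $v_i$, proving the exact identity $\gamma_p(G)=2n+2\tau(H)$; your rigidity analysis of the gadget (the $d_i\ge 2$ bound, the forced $d_i\ge 3$ when $v_i$ is matched to $a_i$, and the injectivity of the assignment $v_i\mapsto x_e$ when $v_i$ is matched into $X$) is complete and the accounting $|S|\ge |C|+|C_B|+3|C_A|+2(n-|C_A|)=2n+2|C|$ goes through. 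The trade-off: the paper's two-copy construction is shorter and, by replacing the complete bipartite graph between $V_1$ and $V_2$ with a clique, immediately yields the chordal and split cases of Theorems~\ref{thm2} and the corollary, whereas your subdivision-based graph contains long induced cycles and cannot serve there; on the other hand, your reduction is local, gives an exact formula for $\gamma_p$, and preserves properties such as planarity and bounded degree of the vertex-cover instance, so it proves hardness for more restricted bipartite subclasses.
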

\begin{proof}
 For a
bipartite graph $G=(V,E)$, a positive even integer $k$, and an
arbitrary subset $S\subseteq V$ with $|S|\le k$, it is easy to
verify in polynomial time whether $S$ is a paired-dominating set
of $G$. Hence,  paired-domination problem is in {\em NP}.

We  construct a reduction from the vertex cover problem. Given a
nontrivial graph $G=(V,E)$, where $V=\{v_1,v_2,\cdots, v_n\}$ and
$E=\{e_1,e_2,\cdots,e_m\}$, Let $V_i=\{v_1^i,v_2^i,\cdots,
v_n^i\}$ and $E_i=\{e_1^i,e_2^i,\cdots,e_m^i\}~(i=1,2)$. Construct
the graph $G'=(V',E')$ with vertex set $V'=V_1\cup V_2\cup E_1\cup
E_2$, and edge set $E'=\{uv|~u\in V_1$ and $v\in V_2\}\cup
\{v_j^ie_k^i|~i=1,2$ and $v_j$ is incident to $e_k$ in $G \}$.
Note that $G'$ is a bipartite graph.

Next, we will show that $G$ has a vertex cover of size at most $k$
if and only if $G'$ has a paired-dominating set of size at most
$2k$. Let $VC=\{v_{i_1},v_{i_2},\cdots, v_{i_k}\}$ be a vertex
cover of $G$. Then it is obvious that
$\{v_{i_1}^1,v_{i_2}^1,\cdots,
v_{i_k}^1\}\cup\{v_{i_1}^2,v_{i_2}^2,\cdots, v_{i_k}^2\}$ is a
paired-dominating set of $G'$ and its size is $2k$. For the
converse, let $PD$ be a paired-dominating set of $G'$ with
$|PD|\le  2k$. Obviously, we can assume that $k\le n-1$ since
$V=\{v_1,v_2,\cdots, v_n\}$ is a vertex cover of $G$. If $PD\cap
E_1\not=\emptyset$, without loss of generality, we assume that
$e^1_1\in PD$ and its paired vertex in $PD$ is $v^1_i$. Since
$k\le n-1$, there exists a vertex $v^2_j\notin PD$. Hence,
$PD\cup\{v^2_j\}-\{e^1_1\}$ is also a paired-dominating set of
$G'$.  Then we may assume that $PD\cap(E_1\cup E_2)=\emptyset$.
Suppose that $VC_1=PD\cap V_1$, and note that $|VC_1|\le k$. Let
$VC=\{v_i|~v_i^1\in VC_1\}$ and $VC$ is a vertex cover of $G$ such
that $|VC|\le k$.

Finally, one can construct $G'$ from $G$ in polynomial time. This
implies that paired-domination problem is {\em NP}-complete for
bipartite graphs. $\Box$
\end{proof}


\begin{theorem}\label{thm2}
Paired-domination problem is   NP-complete for chordal graphs.
\end{theorem}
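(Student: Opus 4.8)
\textbf{Proof proposal for Theorem~\ref{thm2}.}

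The plan is to follow exactly the template used for Theorem~\ref{thm1}: membership in \emph{NP} is immediate, and hardness comes from a polynomial reduction from vertex cover. Given a nontrivial graph $G=(V,E)$ with $V=\{v_1,\dots,v_n\}$ and $E=\{e_1,\dots,e_m\}$, and a positive integer $k$, I would build a chordal graph $G'$ by taking a single copy $V'_0=\{v'_1,\dots,v'_n\}$ of the vertices, making $V'_0$ into a clique (this is the step that forces chordality rather than bipartiteness), adding the $m$ edge-vertices $e'_1,\dots,e'_m$ with $v'_je'_k\in E'$ whenever $v_j$ is an endpoint of $e_k$ in $G$, and then attaching to each $v'_j$ a pendant vertex $x_j$ (so that every $v'_j$ must be dominated and has a private reason to be chosen or to have a neighbour chosen). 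One checks directly that $G'$ is chordal: any cycle either lives inside the clique $V'_0$, or uses an edge-vertex $e'_k$ or a pendant $x_j$, each of which has degree $\le 2$ and is adjacent only to clique vertices, so any long cycle through such a vertex has a chord inside the clique.

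The key claim is then: $G$ has a vertex cover of size at most $k$ if and only if $G'$ has a paired-dominating set of size at most $2k$ (possibly with a small additive adjustment — I would fix the exact bound once the gadget is pinned down, e.g. padding $k$ to be even or adding one fixed dummy edge to the clique to supply a pairing partner). For the forward direction, given a vertex cover $VC=\{v_{i_1},\dots,v_{i_k}\}$ of $G$, the set $\{v'_{i_1},\dots,v'_{i_k}\}$ already dominates all edge-vertices (since $VC$ is a cover) and all of $V'_0$ (since $V'_0$ is a clique), but it need not dominate the pendants $x_j$ for $j\notin\{i_1,\dots,i_k\}$, nor need it induce a perfect matching; the cleanest fix is to use two pendant-free copies as in Theorem~\ref{thm1} but with each $V_i$ a clique — then $\{v^1_{i_t}\}\cup\{v^2_{i_t}\}$ is paired-dominating of size $2k$. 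For the converse, starting from a paired-dominating set $PD$ of $G'$ with $|PD|\le 2k$, I would first argue that $PD$ may be assumed to avoid all edge-vertices: if $e'_k\in PD$, its mate is some $v'_j$ adjacent to it, and (using $k\le n-1$, which is WLOG since $V$ itself is a cover) one can swap $e'_k$ for an unused clique vertex in the other copy, as in the bipartite proof; then $\{v_j : v^1_j\in PD\cap V^1_0\}$ is a vertex cover of $G$ of size at most $k$.

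The main obstacle — and the only place the argument differs nontrivially from Theorem~\ref{thm1} — is getting the gadget to be genuinely chordal while still making the pairing constraint harmless, i.e.\ ensuring the perfect-matching requirement on $G'[PD]$ does not let the adversary get away with fewer than $2\cdot(\text{cover size})$ vertices, and does not block the forward direction. Using two cliques $V^1_0,V^2_0$ joined completely to each other (so the induced graph on any $\{v^1_j,v^2_j\}$ pair has the needed matching edge) handles both: the forward set is automatically perfectly matched by pairing $v^1_{i_t}$ with $v^2_{i_t}$, and in the converse the matching edges give no extra power since every clique vertex already dominates all edge-vertices incident to the corresponding original vertex. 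Once that gadget is fixed, verifying $G'$ is chordal (cycles through a degree-$\le 2$ edge-vertex have a chord in a clique) and that the reduction is polynomial are both routine, and the equivalence proof is a line-by-line adaptation of the bipartite case. $\Box$
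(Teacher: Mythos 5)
Your final gadget (two copies of $V$ forming a single clique, with the two copies of edge-vertices attached within their respective copies) is exactly the construction the paper uses for Theorem~\ref{thm2}, and your forward/converse arguments, including the swap that removes edge-vertices from $PD$ using $k\le n-1$, match the paper's line-by-line adaptation of Theorem~\ref{thm1}. The only nit is that in the converse you should take whichever of $PD\cap V^1_0$, $PD\cap V^2_0$ has size at most $k$ (one of them must, since $|PD|\le 2k$), rather than assuming it is the first copy; with that, the proposal is correct and essentially identical to the paper's proof.
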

\begin{proof}
We still construct a reduction from the vertex cover problem.
Given a nontrivial graph $G=(V,E)$, where $V=\{v_1,v_2,\cdots,
v_n\}$ and $E=\{e_1,e_2,\cdots,e_m\}$, Let
$V_i=\{v_1^i,v_2^i,\cdots, v_n^i\}~(i=1,2)$ and
$E_i=\{e_1^i,e_2^i,\cdots,e_m^i\}~(i=1,2)$. Construct the graph
$G'=(V',E')$ with vertex set $V'=V_1\cup V_2\cup E_1\cup E_2$, and
edge set $E'=\{uv|~u\in V_1\cup V_2$ and $u\not=v\}\cup
\{v_j^ie_k^i|~i=1,2$ and $v_j$ is incident to $e_k$ in $G \}$.
Note that $G'$ is a chordal graph.

It is straightforward to show that $G$ has a vertex cover of size
at most  $k$ if and only if $G'$ has a paired-dominating set of
size at most $2k$. The proof is almost similar with that of
Theorem \ref{thm1}. In here, we can also assume that $k\le n-1$
and $PD\cap(E_1\cup E_2)=\emptyset$. So, either $VC_1=PD\cap V_1$
or $VC_2=PD\cap V_2$ has
 size at most $k$. The detail is left to readers. \ \ $\Box$
\end{proof}
\vskip 0.3cm
 Note that $G'$ in Theorem \ref{thm2} is  also a split
graph. Hence we get a stronger result as follows.

\begin{corollary}
Paired-domination problem is  NP-complete for split graphs.
\end{corollary}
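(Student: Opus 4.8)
The plan is to prove Corollary~\ref{cor:split} (paired-domination is \emph{NP}-complete for split graphs) by observing that the graph $G'$ constructed in the proof of Theorem~\ref{thm2} is already a split graph, so essentially nothing new must be constructed. First I would recall the construction: given a nontrivial graph $G=(V,E)$ with $V=\{v_1,\dots,v_n\}$ and $E=\{e_1,\dots,e_m\}$, we form $G'$ on vertex set $V'=V_1\cup V_2\cup E_1\cup E_2$, where $V_1\cup V_2$ induces a clique (all pairs joined), each $E_i$ is an independent set, and $v_j^i$ is adjacent to $e_k^i$ exactly when $v_j$ is an endpoint of $e_k$ in $G$. Membership in \emph{NP} is immediate (check in polynomial time whether a vertex subset of size at most $2k$ is a paired-dominating set), and the polynomial-time reduction from \textsc{Vertex Cover} and its correctness are exactly as in Theorem~\ref{thm2}, which I may invoke verbatim. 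So the whole content of the corollary is the structural claim that $G'$ is split.

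The key step is therefore to exhibit a partition of $V'$ into a clique and a stable set. I would take $K=V_1\cup V_2$ as the clique: by construction every two distinct vertices of $V_1\cup V_2$ are adjacent, so $G'[V_1\cup V_2]$ is a complete graph. I would take $I=E_1\cup E_2$ as the stable set: by construction there are no edges among the $e_k^i$ (edges of $G'$ incident to $E_1\cup E_2$ only go to $V_1\cup V_2$), so $G'[E_1\cup E_2]$ is edgeless. Since $V'=K\sqcup I$ is a partition into a clique and an independent set, $G'$ is split by definition. (As a sanity check, split graphs are chordal, consistent with Theorem~\ref{thm2}.)

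Combining these pieces, the same polynomial-time map $G\mapsto(G',2k)$ that reduces \textsc{Vertex Cover} to paired-domination on chordal graphs in Theorem~\ref{thm2} in fact reduces it to paired-domination on split graphs, and together with \emph{NP}-membership this yields \emph{NP}-completeness for split graphs. The only thing to be careful about is a trivial edge case: if $m=0$ (or more generally if $G'$ has isolated vertices) the paired-domination number is not defined, but the hypothesis that $G$ is nontrivial rules this out, since every $e_k^i$ has a neighbor in $V_i$ and every $v_j^i$ has a neighbor in $V_{3-i}$. I do not anticipate a genuine obstacle here; the ``hard part,'' such as it is, is merely recognizing that the chordal-case construction was already split and assembling the observation into a clean statement.
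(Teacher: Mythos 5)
Your proposal is correct and matches the paper's own argument exactly: the paper also observes that the graph $G'$ built in the proof of Theorem~\ref{thm2} is already a split graph (clique $V_1\cup V_2$, stable set $E_1\cup E_2$), so the chordal-graph reduction immediately gives the split-graph result. You merely spell out the clique/stable-set partition more explicitly than the paper does.
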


\small {

}

\end{document}